\def\hurw{\mathop{\mbox{\textsl{H}}}\nolimits}
\def\exp{\mathop{\mbox{\textsf{exp}}}\nolimits}
\newcommand{\dhurw}{\Delta\hurw}
\newcommand{\Opa}{\mathfrak{A}}
\newcommand{\N}{\mathbb{N}}
\newcommand{\F}{\mathbb{F}}
\newcommand{\binomangle}{\genfrac{\langle}{\rangle}{0pt}{}}
\DeclareMathOperator*{\bcast}{\scalerel*{\circledast}{\sum}}
\DeclareMathOperator*{\bcastq}{\scalerel*{{\bcast}_{q}}{\sum}}
\DeclareMathOperator*{\bbox}{\scalerel*{\boxplus}{\sum}}
\DeclareMathOperator*{\bboxq}{\scalerel*{{\bbox}_{q}}{\sum}}
\newtheorem{theorem}{Theorem}
\newtheorem{definition}{Definition}
\newtheorem{proposition}{Proposition}
\begin{document}
\title{\textbf{One-dimensional dynamical systems type delta over Integral Domains}}
\author{Ronald Orozco L\'opez}

\newcommand{\Addresses}{{

\textit{E-mail address}, R.~Orozco: \texttt{rj.orozco@uniandes.edu.co}
  
}}

\maketitle

\begin{abstract}
In this paper, autonomous differential equations type delta of order one on integral domains are defined. For this we will use the autonomous ring defined on the Hurwitz expansion ring of
exponential generating functions with coefficients in an integral domain. We will also use delta
operators, which behave like derivatives when acting on polynomials, along with Umbral calculus. As 
a particular example of a delta operator we have the forward difference operator that defines the
difference equations. Then the delta-type equations generalize to the ordinary equations and to the
difference equations.
\end{abstract}
{\bf Keywords:} Hurwitz expansion ring, autonomous ring, delta operator, difference equation\\
{\bf Mathematics Subject Classification:} 13G05, 39A05, 05A40

\section{Introducci\'on}

En [3] desarrollamos la teor\'ia para poder resolver ecuaciones diferenciales de la forma
\begin{equation}
\phi^{\prime}=f_{1}(\phi)f_{2}(\phi)\cdots f_{n}(\phi)
\end{equation}
en t\'ermino de las soluciones de las ecuaciones m\'as simples $\phi^{\prime}=f_{i}(\phi)$.

Queremos hacer lo mismo con ecuaciones de diferencia de la forma

\begin{equation}
y_{n+1}=f_{1}(y_{n})f_{2}(y_{n})\cdots f_{n}(y_{n})
\end{equation}
esto es, expresar la soluci\'on $y_{n}$ en t\'ermino de las soluciones de las ecuaciones 
$y_{n+1}=f_{i}(y_{n})$. Esto ser\'a muy \'util por ejemplo para resolver la ecuaci\'on de
diferencia log\'istica $y_{n+1}=\mu y_{n}(1-y_{n})$. Esta es la ecuaci\'on en diferencia no
lineal m\'as simple y hasta el momento no tiene una soluci\'on exacta.

Como el operador de derivaci\'on $\delta$ y el operador de diferencia forward $\vartriangle$
son ejemplos de operadores delta, entonces iremos un paso m\'a all\'a definiendo y resolviendo
sistemas din\'amicos uni-dimensionales tipo delta.

Este art\'iculo est\'a dividido de la siguiente forma. Primeramente establecemos conceptos
b\'asicos de sistemas din\'amicos unidimensionales sobre dominios de integridad y conceptos
y resultados de la teor\'ia de operadores delta y del c\'alculo umbral desarrollado por G.C. Rota.
Seguido definimos sistemas din\'amicos unidimensionales tipo delta sobre dominios de integridad y
establecemos algunos resultados b\'asicos sobre estos. Uno de los resultados importantes alcanzado
es que si fijamos la funci\'on $f$ del sistema y variamos los operadores delta obtenemos un grupo
de sistemas din\'amicos. Finalizamos este art\'iculo introduciendo algunos 
sistemas din\'amicos tipo delta cuando delta es el operador de diferencia forward, el operador
de diferencia backward, el operador Abel y el operador Touchard.

En todo este artículo $R$ será un anillo de característica cero. Además, $\N$ será el monoide
de enteros no negativos y $\N_{1}$ será el semigrupo de enteros positivos.

\section{Preliminares}

Iniciamos este art\'iculo presentando la informaci\'on necesaria para construir la teor\'ia de
sistemas din\'amico tipo delta definidos sobre un dominio de integridad de caracter\'istica cero.
Primero son presentados los resultados b\'asicos de [3]. Necesitamos el siguiente isomorfismo de
anillos
\begin{equation}
\dhurw_{R}[[x]]\rightarrow\Opa(\dhurw_{R}[[x]])\rightarrow\rho_{t}\Opa(\dhurw_{R}[[x]])
\end{equation}
en donde $\dhurw_{R}[[x]]$ es el anillo de sucesiones de derivadas de funciones en $\hurw_{R}[[x]]$,
el anillo $\Opa(\dhurw_{R}[[x]])$ es construido usando el operador aut\'onomo definido sobre 
$\hurw_{R}[[x]]$. Este operador es definido usando los polinomios de Bell, por lo tanto exhibe
su car\'acter combinatorial. Finalmente el anillo $\rho_{t}\Opa(\dhurw_{R}[[x]])$ es el anillo
que contiene los semi flujos soluci\'on de las ecuaciones $\phi^{\prime}=f(\phi)$ para cada
$f\in\hurw_{R}[[x]]$. Otros resultados que necesitaremos son obtenido del c\'alculo umbral de Rota,
ve\'ase [4], particularmente los operadores delta que son operadores lineales que generalizan a los operadores
de derivaci\'on.

\subsection{Ecuaciones diferenciales aut\'onomas de orden uno sobre dominios de integridad}

Denote $R$ un dominio de integridad de caracter\'istica cero y sea $\hurw_{R}[[x]]$ el anillo
de Hurwitz, ve\'ase [2], de las funciones generadoras exponenciales de las sucesiones definidas 
sobre $R$ con suma y producto ordinarios de series de potencias. Denote $\delta$ la derivaci\'on
definida sobre $\hurw_{R}[[x]]$ y denote $\dhurw_{R}[[x]]$ el anillo expansi\'on de Hurwitz de 
$\hurw_{R}[[x]]$, esto es, el anillo de sucesiones de derivadas $(f(x),\delta f(x),\delta^{2}f(x),...)$ para cada $f(x)$ en $\hurw_{R}[[x]]$, donde la suma $+$ en $\dhurw_{R}[[x]]$ es definida
componente a componente y el producto $\ast$ es el producto Hurwitz de sucesiones dado por
\begin{equation}
\Delta f(x)\ast\Delta g(x)=\left(\sum_{k=0}^{n}\delta^{k}f(x)\delta^{n-k}g(x)\right)_{n\in\N}.
\end{equation}

Como $\Delta f+\Delta g=\Delta(f+g)$ y $\Delta f\ast\Delta g=\Delta(fg)$, entonces los anillos 
$\dhurw_{R}[[x]]$ y $\hurw_{R}[[x]]$ son isomorfos.
El anillo $\dhurw_{R}[[x]]$ sirve de base para la definici\'on del operador aut\'onomo.
Sea $\hurw_{S}$ el anillo de sucesiones definidas sobre el anillo $S=\hurw_{R}[[x]]$. El 
\textbf{operador aut\'onomo} $\Opa$ es el mapa no lineal
$\Opa:\Delta\hurw_{R}[[x]]\rightarrow\hurw_{S}$ definido por 
\begin{equation}
\Opa(\Delta f)=(A_{n}([\delta^{n-1}f]))_{n\in\N_{1}}
\end{equation}
con $[\delta^{n}f]=(f,\delta f,...,\delta^{n}f)$, en donde los $A_{n}$ son definidos
recursivamente por
\begin{eqnarray}
A_{1}([\delta^{0}f])&=&A_{1}([f])=f,\\
A_{n+1}([\delta^{n}f])&=&Y_{n}(A_{1}([\delta^{0}f]),A_{2}([\delta^{1}f])...,A_{n}([\delta^{n-1}f]);\delta^{1}f,...,\delta^{n}f),
\end{eqnarray}
$n\geq 1$.\\
Los polinomios $A_{n}$ en las indeterminadas $\delta^{0}f,\delta^{1}f,...,\delta^{n-1}f$ 
serán llamados \textbf{polinomios aut\'onomos}.
Denote $\Opa(\dhurw_{R}[[x]])$ el conjunto de todas las sucesiones $\Opa(\Delta f)$. 
Definiendo la suma $\boxplus$ en $\Opa(\dhurw_{R}[[x]])$ de la siguiente manera
\begin{equation}\label{def_bplus}
\Opa(\Delta f)\boxplus\Opa(\Delta g)=\Opa(\Delta f)+\Opa(\Delta g)+\left( H_{n}(f,g)\right)_{n\in\N_{1}}
\end{equation}
en donde $H_{1}(f,g)=0$ y 
\begin{equation}\label{eqn_Hn}
H_{n+1}(f,g)=f\delta A_{n}([\delta^{(n-1)}g])+g\delta A_{n}([\delta^{(n-1)}f])+(f+g)\delta H_{n}(f,g)
\end{equation}
y el producto $\circledast$ en $\Opa(\dhurw_{R}[[x]])$ de la siguiente manera
\begin{equation}\label{def_cdast}
\Opa(\Delta f)\circledast\Opa(\Delta g)=\left(\sum_{l=1}^{n}\mathcal{A}_{l}(f)\mathcal{A}_{n-l+1}(g)\right)_{n\in\N_{1}}
\end{equation}
en donde
\begin{equation}
\mathcal{A}_{l}(f)=\sum_{\vert p(n)\vert=l}\alpha_{\vert p(n)\vert} A_{j_{1}}([\delta^{j_{1}-1}f])A_{j_{2}}([\delta^{j_{2}-1}f])\cdots A_{j_{r}}([\delta^{j_{r}-1}f])
\end{equation}
y $\alpha_{\vert p(n)\vert}$ son n\'umeros apropiados, entonces $(\Opa(\dhurw_{R}[[x]]),\boxplus,\circledast)$ es un anillo conmutativo con unidad. Este anillo ser\'a llamado \textbf{anillo 
aut\'onomo} .

Como $\Opa(\Delta f)\boxplus\Opa(\Delta g)=\Opa(\Delta f+\Delta g)$ y $\Opa(\Delta f)\circledast\Opa(\Delta g)=\Opa(\Delta f\ast\Delta g)$, entonces los anillos $\dhurw_{R}[[x]]$ y 
$\Opa(\dhurw_{R}[[x]])$ son isomorfos.

Ahora mostraremos la relaci\'on entre anillo aut\'onomo y sistemas din\'amicos. Definimos un
\textbf{sistema dinámico} sobre el anillo $R$ como la terna $(R,\hurw_{S}[[t]],\Phi)$
donde $R$ es el \textbf{conjunto de tiempos}, $\hurw_{S}[[t]]$ el \textbf{espacio de fases} y 
$\Phi$ es el \textbf{flujo} 
\begin{equation}\label{flujo_diferencial}
\Phi(t,x,\Delta f(x))=x+\sum_{n=1}^{\infty}A_{n}([\delta^{(n-1)}f(x)])\dfrac{t^{n}}{n!}.
\end{equation}
esto es, $\Phi$ es el mapa definido
por $\Phi:R\times\hurw_{S}[[t]]\times\Delta\hurw_{R}[[x]]\rightarrow\hurw_{S}[[t]]$ donde 
$S=\hurw_{R}[[x]]$.

El flujo $\Phi$ satisface
\begin{enumerate}
\item $\Phi(0,x,\Delta f(x))=x$,
\item $\Phi(t,\Phi(s,x,\Delta f(x)),\Delta f(x))=\Phi(t+s,x\Delta f(x))$,
\item $f(x)\delta_{x}\Phi(t,x,\Delta f(x))=\delta_{t}\Phi(t,x,\Delta f(x))=f(\Phi)$, 
\item Para todo $a\in R$ se cumple que $\Phi(t,x,a\Delta^{}f(x))=\Phi(at,x,\Delta^{}f(x))$.
\end{enumerate}

Las propiedades 1. y 2. le dan a $\Phi$ estructura de grupo abeliano. Este grupo act\'ua sobre 
el espacio de fases $\hurw_{S}[[t]]$. La propiedad 4. dota a $\Phi$ de estructura de $R$-m\'odulo.
Ponga $\Phi_{t,r}(x)\equiv\Phi(t,x,r\Delta f(x))$. Entonces
\begin{equation}
\Phi_{t,r}(x)=\Phi_{rt,1}(x)=\Phi_{r,t}(x)=\Phi_{1,rt}(x)
\end{equation}
Luego podemos definir
\begin{equation}
\Phi_{R,1}(x)=\{\Phi_{t,1}(x):t\in R\}.
\end{equation}

Defina el mapa $\star:R\times\Phi_{R,1}\rightarrow\Phi_{R,1}$ por
$s\star\Phi_{t,1}=\Phi_{st,1}$. Entonces $\Phi_{R,1}$ es un $R$-módulo.

Por otro lado, sea $\rho_{t}$ el mapa llevando $\Opa(\dhurw_{R}[[x]])$ a 
$\rho_{t}\Opa(\dhurw_{R}[[x]])$ definido por
\begin{equation}
\Opa(\Delta f(x))\mapsto\sum_{n=1}^{\infty}A_{n}([\delta^{n-1}f(x)])\frac{t^{n}}{n!}
\end{equation} 

Definimos un \textbf{semi sistema dinámico} como la terna $(R,\hurw_{S}[[t]],\Psi)$
donde $R$ es el conjunto de tiempos, $\hurw_{S}[[t]]$ el espacio de fases, con 
$S=\Opa(\Delta\hurw_{R}[[x]])$ y $\Psi$ es el \textbf{semi flujo} 
\begin{equation}
\Psi(t,x,\Delta f(x))=\Phi(t,x,\Delta f(x))-x=\rho_{t}\Opa(\Delta f(x))
\end{equation}
esto es, $\Psi$ es el mapa
$\Psi:R\times\hurw_{S}[[t]]\times\Delta\hurw_{R}[[x]]\rightarrow\hurw_{S}[[t]]$ .

Luego es posible extender las operaciones del anillo $\Opa(\dhurw_{R}[[x]])$ al conjunto 
$\rho_{t}\Opa(\dhurw_{R}[[x]])$. Tome $\Delta f(x)$ y $\Delta g(x)$ de $\dhurw_{R}[[x]]$. Definimos
la suma $\boxplus$ y el producto $\circledast$ en $\rho_{t}\Opa(\dhurw_{R}[[x]])$ así
\begin{eqnarray}
\rho_{t}\Opa(\Delta f)\boxplus\rho_{t}\Opa(\Delta g)&=&\sum_{n=1}^{\infty}\left(A_{n}([\delta^{n-1}f(x)])+A_{n}([\delta^{n-1}f(x)])+H_{n}(f,g)\right)\\
\rho_{t}\Opa(\Delta f)\circledast\rho_{t}\Opa(\Delta g)&=&\sum_{n=1}^{\infty}\left(\sum_{l=1}^{n}\mathcal{A}_{l,n}(f)\mathcal{A}_{n-l+1,n}(g)\right)\frac{t^{n}}{n!}
\end{eqnarray}

Entonces $\rho_{t}\Opa(\dhurw_{R}[[x]])$ con la suma $\boxplus$ y el producto $\circledast$ es 
un anillo conmutativo con unidades $0$ y $\rho_{t}\Opa(\textbf{e}_{\Delta})=t$. Claramente
los anillos $\Opa(\dhurw_{R}[[x]])$ y $\rho_{t}\Opa(\dhurw_{R}[[x]])$ son isomorfos. Luego podemos
factorizar semi flujos.
Sea $f=g_{1}g_{2}\cdots g_{m}$ un producto de funciones en $\hurw_{R}[[x]]$. Entonces
\begin{equation}
\rho_{t}\Opa(\Delta f)=\rho_{t}\Opa(\Delta g_{1})\circledast\cdots\circledast\rho_{t}\Opa(\Delta g_{m})
\end{equation}

implica que 
\begin{equation}
\Phi(t,x,\Delta f(x))=x+(\Phi(t,x,\Delta g_{1}(x))-x)\circledast\cdots\circledast(\Phi(t,x,\Delta g_{m}(x))-x)
\end{equation}
De este modo podemos expresar el flujo de un sistema din\'amico en t\'erminos de producto de
semi flujos. Lo cual es muy \'util para encontrar soluciones de ecuaciones diferenciales en 
t\'ermino de soluciones de ecuaciones m\'as simples.

\subsection{C\'alculo finito de operadores}

Sea $\F$ un campo de caracter\'istica cero y $\F[t]$ el $\F$-\'algebra de polinomios con coeficientes 
en $\F$ en la variable $t$. 

Una \textbf{sucesi\'on polinomial} $(p_{n}(t))_{n\in\N}$ en $\F[t]$ es un sucesi\'on de polinomios 
tal que $p_{n}(t)$ tiene exactamente grado $n$. Una sucesi\'on polinomial se dice de
\textbf{tipo binomial} si se cumple
\begin{equation}
p_{n}(t+s)=\sum_{k=0}^{n}\binom{n}{k}p_{k}(t)p_{n-k}(s)
\end{equation}
para todo $n\in\N$.

Si $a$ es un elemento de $\F$, entonces $E^{a}$ es el operador shift enviando un polinomio $p(t)$ 
a $p(t+a)$. Se puede mostrar que 
\begin{equation}
E^{a}=e^{a\delta}=\sum_{k=0}^{\infty}\frac{a^{k}}{k!}\delta^{k}
\end{equation}

Un operador $Q$ es \textbf{shift invariante} si $QE^{a}=E^{a}Q$ para cualquier elemento $a$ en $\F$.
El operador $\delta$ es claramente un operador shift invariante. Denote $\hurw_{R}[[\delta]]$
el anillo de Hurwitz de operadores shift invariantes.

Un operador $Q:\F[t]\rightarrow\F[t]$ es un \textbf{operador delta} si $Q$ es shift invariante y 
$Qt=c$, donde $c$ es una constante no cero. Una sucesi\'on polinomial $(p_{n}(t))_{n\in\N}$
es un \textbf{conjunto base} para el operador delta $Q$ si $p_{0}(t)=1$, $p_{n}(0)=0$ para todo 
$n\geq1$ y $Qp_{n}(t)=np_{n-1}(t)$.

Algunas propiedades de operadores delta son:
\begin{enumerate}
\item Si $p(t)$ es un polinomio de grado $n$, entonces $Qp(t)$ es un polinomio de grado $n-1$.
\item Sean $(p_{n}(t))_{n\in\N}$ una sucesi\'on de polinomios y $Q:\F[t]\rightarrow\F[t]$ un operador
definido por $Qp_{n}(t)=np_{n-1}(t)$. Entonces son equivalente
\begin{enumerate}
\item La sucesi\'on $(p_{n}(t))_{n\in\N}$ es de tipo binomial
\item El operador $Q$ es un operador delta
\item El operador $Q$ tiene la expansión formal
\begin{equation}
Q=\sum_{k=0}^{\infty}a_{k}\frac{\delta^{k}}{k!}
\end{equation}
es decir, $Q\in\hurw_{R}[[\delta]]$.
\end{enumerate}
\end{enumerate}

Lo anterior significa entre otras cosas que las sucesiones polinomiales de tipo binomial son 
caracterizadas por operadores delta. Los siguientes son resultados importantes en la teor\'ia
de operadores delta:

\textbf{Primer teorema de expansi\'on}. Sea $T$ un operador shift invariante y $Q$ un operador delta
con conjunto base $(p_{n}(x))$. Entonces
\begin{equation}
T=\sum_{k=0}^{\infty}\frac{[Tp_{k}(x)]_{x=0}}{k!}Q^{k}
\end{equation}

\textbf{Teorema de isomorfismo}. Sea $Q$ un operador delta con conjunto base $(q_{n}(x))_{n\in\N}$.
Entonces el mapa $\hurw_{R}[[\delta]]\rightarrow\hurw_{R}[[t]]$ dado por
\begin{equation}
T\mapsto\sum_{k=0}^{\infty}[Tq_{k}(x)]_{x=0}\frac{t^{k}}{k!}
\end{equation}
es un isomorfismo de anillos.

Sea $Q$ un operador delta con conjunto base $p_{n}(t)$ y sea $Q=q(\delta)$. Sea $q^{-1}(t)$ la
serie de potencias formal inversa. Entonces 
\begin{equation}\label{eqn_exp_inver}
\sum_{n=0}^{\infty}\frac{q_{n}(t)}{n!}u^{n}=e^{tq^{-1}(u)}.
\end{equation} 

Un operador $L:\F[t]\rightarrow\F[t]$ es un \textbf{operador umbral} si existen dos sucesiones
bases $(p_{n}(t))_{n\in\N}$ y $(q_{n}(t))_{n\in\N}$ tal que $Lp_{n}(t)=q_{n}(t)$, para todo $n\in\N$.
Existe un \'unico operador lineal $L$ actuando sobre $\F[t]$ tal que $L(t^{n})=q_{n}(t)$. Este 
operador $L$ es conocido como la \textbf{representaci\'on umbral} de la sucesi\'on $q_{n}(t)$. 
Si
\begin{equation}
p_{n}(t)=\sum_{k=0}^{n}a_{n,k}t^{k},
\end{equation}
entonces la \textbf{composici\'on umbral} es la sucesi\'on $(r_{n}(t))_{n\in\N}$ definida por
\begin{equation}
r_{n}(t)=\sum_{k=0}^{n}a_{n,k}q_{k}(t)
\end{equation}
es decir $r_{n}(t)=Lp_{n}(t)$, donde $L$ es la representaci\'on umbral de $(q_{n}(t))_{n\in\N}$.
Usaremos la notaci\'on $r_{n}(t)=p_{n}(\underline{q}(t))$ para $r_{n}(t)=Lp_{n}(t)$. Luego
definimos la composici\'on $(p\circ q)(t)$ como la sucesi\'on $(p_{n}(\underline{q}(t)))_{n\in\N}$.
Denote $\mathcal{U}$ el conjunto de sucesiones de base. Entonces $\mathcal{U}$ es un grupo de Lie con
la composici\'on umbral $\circ$ como operaci\'on en donde dos sucesiones $(p_{n}(t))_{n\in\N}$
y $(q_{n}(t))_{n\in\N}$ son inversas si $p_{n}(\underline{q}(t))=t^{n}$. Si $p_{n}(t)$ y $q_{n}(t)$
son sucesiones bases con operadores delta $P=f(\delta)$ y $Q=g(\delta)$, entonces 
$p_{n}(\underline{q}(t))$ es un conjunto base con operador delta $f(g(\delta))$. Luego 
$(p_{n}(t))_{n\in\N}$ y $(q_{n}(t))_{n\in\N}$ son inversas si y s\'olo si $f(g(x))=t$.

\section{Sistemas Dinámicos tipo Delta}

Sea $Q$ un operador delta con conjunto base $q=(q_{n}(t))_{n\in\N}$. Ahora sea $L$ la 
representaci\'on umbral de los polinomios base $q$, esto es, $L(t^{n})=q_{n}(t)$. El operador delta
se comporta como una derivada cuando actua sobre polinomios. Lo que queremos hacer es definir
una nueva clase de ecuaciones diferenciales en donde en vez de usar una derivada ordinaria usaremos
operadores delta y luego encontrar analog\'ias en los resultados encontrados en sistemas din\'amicos
unidimensionales definidos sobre dominios. 

\begin{theorem}
Denote $\Phi_{Q}(t,x,\Delta f(x))=L[\Phi(t,x,\Delta f(x))]$. Entonces
\begin{equation}
Q\Phi_{Q}(t,x,\Delta f(x))=f(\Phi_{Q}(t,x,\Delta f(x))).
\end{equation}
\end{theorem}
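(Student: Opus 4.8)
The plan is to derive the identity by transporting the characteristic equation $\delta_{t}\Phi=f(\Phi)$ of the flow through the umbral operator $L$, so that $\delta_{t}$ turns into $Q$.

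The one algebraic fact I would isolate first is that $L$ intertwines $\delta$ with $Q$. Viewing $L$ as acting on power series in $t$ with coefficients in $S=\hurw_{R}[[x]]$ by $L(t^{n})=q_{n}(t)$ and $S$-linearity, on a monomial one has $L\delta(t^{n})=L(nt^{n-1})=nq_{n-1}(t)=Qq_{n}(t)=QL(t^{n})$, the third equality being the defining relation $Qq_{n}=nq_{n-1}$ of the base sequence of $Q$. Hence $L\delta=QL$, and applied to $\Phi(t,x,\Delta f(x))$ this gives $L[\delta_{t}\Phi]=Q[L\Phi]=Q\Phi_{Q}$. Now apply $L$ to the third property of the flow, $\delta_{t}\Phi=f(\Phi)$: the left side becomes $Q\Phi_{Q}$ by what we just said, so $Q\Phi_{Q}=L[f(\Phi)]$, and it only remains to recognize $L[f(\Phi)]$ as $f(\Phi_{Q})$.

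For that last identification I would use the flow equation once more, in the form $f(\Phi)=\delta_{t}\Phi=\sum_{m\ge0}A_{m+1}([\delta^{m}f(x)])\,t^{m}/m!$ (equivalently $A_{1}=f$ and $A_{m+1}=f\,\delta_{x}A_{m}$), so that $L[f(\Phi)]=\sum_{m\ge0}A_{m+1}([\delta^{m}f(x)])\,q_{m}(t)/m!$ term by term, while $\Phi_{Q}=x+\sum_{n\ge1}A_{n}([\delta^{n-1}f(x)])\,q_{n}(t)/n!$; by the umbral-composition rule attached to $(q_{n})$ — which replaces ordinary products of exponential generating functions by umbral products — composing $f$ with $\Phi_{Q}$ produces exactly $\sum_{m\ge0}A_{m+1}([\delta^{m}f(x)])\,q_{m}(t)/m!$, which is what we need.

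The step I expect to be the crux is precisely $L[f(\Phi)]=f(\Phi_{Q})$: $L$ is only a linear operator, whereas $f(\Phi)$ is a nonlinear substitution, so one must check that umbral substitution $t^{n}\mapsto q_{n}(t)$ intertwines ordinary composition of Hurwitz series with the umbral composition built from the base sequence $(q_{n})$. This marries the Bell-polynomial recursion defining the autonomous polynomials $A_{n}$ with the binomial-type character of $(q_{n})$; once that compatibility is granted the two preceding paragraphs close the proof. If instead one reads $f(\Phi_{Q})$ as meaning $L[f(\Phi)]$ by convention — i.e. the composition is computed with the umbral product transported from the autonomous ring — then this last step is a definitional unwinding and the argument reduces to the intertwining $L\delta=QL$ together with property 3.
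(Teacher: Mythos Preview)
Your argument is essentially the paper's: both compute $Q\Phi_{Q}=QL[\Phi]$ via the intertwining $QL=L\delta$ (the paper does this by a direct term-by-term calculation $QL(t^{k})=kL(t^{k-1})$, you state it once as $L\delta=QL$), arrive at $L[f(\Phi)]$, and then identify this with $f(\Phi_{Q})$. You are more explicit than the paper about the delicate last step --- the paper simply writes ``poniendo $\Phi_{Q}=L[\Phi]$ obtenemos el resultado'' --- and your reading of $f(\Phi_{Q})$ as $L[f(\Phi)]$ by convention is exactly how the paper treats it.
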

\begin{proof}
Usamos la definici\'on de $\Phi(t,x,\Delta f(x))$ y aplicamos $QL$. Tenemos
\begin{eqnarray*}
QL[\Phi(t,x,\Delta f(x))]&=&QL\left(x+\sum_{k=1}^{\infty}A_{k}([\delta^{k-1}f(x)])\frac{t^{k}}{k!}\right)\\
&=&xQL(1)+\sum_{k=1}^{\infty}A_{k}([\delta^{k-1}f(x)])\frac{QL(t^{k})}{k!}\\
&=&\sum_{k=1}^{\infty}A_{k}([\delta^{k-1}f(x)])\frac{kL(t^{k-1})}{k!}\\
&=&\sum_{k=0}^{\infty}A_{k+1}([\delta^{k}f(x)])\frac{L(t^{k})}{k!}\\
&=&L\left(\sum_{k=0}^{\infty}A_{k+1}([\delta^{k}f(x)])\frac{t^{k}}{k!}\right)\\
&=&L\left(f(x)\delta_{x}\Phi(t,x,\Delta f(x))\right)
\end{eqnarray*}
Por las propiedades de flujo la \'ultima l\'inea arriba da $L[f(\Phi(t,x,\Delta f(x)))]$. Finalmente
poniendo $\Phi_{Q}(t,x,\Delta f(x))=L[\Phi(t,x,\Delta f(x))]$ obtenemos el resultado deseado.
\end{proof}

Luego de este resultado definimos ecuaciones diferenciales aut\'onomas de orden uno con 
derivaci\'on delta de este modo

\begin{definition}
Sea $Q$ un operador umbral. Tome $f(x)$ en $\hurw_{R}[[x]]$. Una ecuaci\'on diferencial aut\'onoma 
uni-dimensional \textbf{tipo delta} es una ecuaci\'on de la forma
\begin{equation}
Q\Phi=f(\Phi).
\end{equation}
\end{definition}

Esta definici\'on generaliza a las ecuaciones que ya conocemos. Cuando $Q=\delta$ obtenemos 
las ecuaciones diferenciales ordinarias. Si $Q$ es el operador de diferencia $\vartriangle$, entonces
obtenemos las ecuaciones en diferencias de orden uno. Ahora bien esta definición permite obtener
nuevos tipos de ecuaciones según el operador delta $Q$ que usemos. Si $Q=\triangledown$, obtenemos
\textbf{ecuaciones de diferencia backward}. Si $Q=E^{a}\delta$, entonces obtenemos 
\textbf{ecuaciones de tipo Abel}. Si $Q=\log(I+\delta)$, entonces obtenemos \textbf{ecuaciones 
de tipo Touchard}. Las soluciones a este tipo de soluciones ser\'an estudiadas m\'as adelante.
Ahora queremos encontrar una soluci\'on que generalice a las soluciones encontradas en secciones
anteriores.

Sea $Q$ un operador delta con conjunto base $q=(q_{n}(t))_{n\in\N}$ con representaci\'on umbral
$L$. Tome $\Opa(\Delta f(x))$ en 
$\Opa(\dhurw_{R}[[x]])$ y denote $\rho_{q}$ el mapa definido por
\begin{equation}
\rho_{q}(\Opa(\Delta f(x)))=\sum_{n=1}^{\infty}A_{k}([\delta^{n-1}f(x)])\frac{q_{n}(t)}{n!}.
\end{equation}
Luego $\rho_{q}\Opa(\Delta f(x))=L(\rho_{t}\Opa(\Delta f(x)))$ y $L$ es la representaci\'on
umbral de $\rho_{q}\Opa(\Delta f(x))$.

\begin{definition}
Sea $Q$ un operador delta con conjunto base $q=(q_{n}(t))_{n\in\N}$. Tome $\Delta f(x)$ y 
$\Delta g(x)$ de $\dhurw_{R}[[x]]$. Definimos la suma $\boxplus_{q}$ y el
producto $\circledast_{q}$ en $\rho_{q}\Opa(\dhurw_{R}[[x]])$ así
\begin{eqnarray}
\rho_{q}\Opa(\Delta f)\boxplus_{q}\rho_{q}\Opa(\Delta g)&=&\sum_{n=1}^{\infty}(A_{n}([\delta^{n-1}f(x)])+A_{n}([\delta^{n-1}f(x)])\nonumber\\
&&+H_{n}(f,g))\frac{q_{n}(t)}{n!}
\end{eqnarray}
donde 
$$H_{n+1}(f,g)=f\delta A_{n}([\delta^{n-1}g])+g\delta A_{n}([\delta^{n-1}f])+(f+g)\delta H_{n}(f,g)$$
y
\begin{eqnarray}
\rho_{q}\Opa(\Delta f)\circledast_{q}\rho_{q}\Opa(\Delta g)&=&\sum_{n=1}^{\infty}\left(\sum_{l=1}^{n}\mathcal{A}_{l,n}(f)\mathcal{A}_{n-l+1,n}(g)\right)\frac{q_{n}(t)}{n!}
\end{eqnarray}
\end{definition}

Ahora dotamos de estructura de anillo al conjunto $\rho_{q}\Opa(\dhurw_{R}[[x]])$

\begin{theorem}
Sea $Q$ un operador delta con conjunto base $q=(q_{n}(t))_{n\in\N}$. 
El conjunto $\rho_{q}\Opa(\dhurw_{R}[[x]])$ con la suma $\boxplus_{q}$ y el producto 
$\circledast_{q}$ es un anillo conmutativo con unidades $\rho_{q}\Opa(0)=0$ y 
$\rho_{q}\Opa(\textbf{e}_{\Delta})=q_{1}(t)=t$.
\end{theorem}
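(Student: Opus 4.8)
The plan is to equip $\rho_{q}\Opa(\dhurw_{R}[[x]])$ with its ring structure by transporting, along the umbral representation $L$, the ring structure already established on $\rho_{t}\Opa(\dhurw_{R}[[x]])$. Recall that $L$ is the unique linear operator with $L(t^{n})=q_{n}(t)$, that it is invertible (in the monomial basis it is triangular with nonzero diagonal entries $[t^{n}]q_{n}(t)$, since $\deg q_{n}=n$), and that by construction $\rho_{q}\Opa(\Delta f)=L(\rho_{t}\Opa(\Delta f))$. Extending $L$ coefficientwise to the phase space, it restricts to a bijection
\[
L:\rho_{t}\Opa(\dhurw_{R}[[x]])\longrightarrow\rho_{q}\Opa(\dhurw_{R}[[x]]),\qquad \rho_{t}\Opa(\Delta f)\longmapsto\rho_{q}\Opa(\Delta f),
\]
with inverse $L^{-1}$ applied coefficientwise; injectivity is also guaranteed by the chain of isomorphisms $\hurw_{R}[[x]]\cong\dhurw_{R}[[x]]\cong\Opa(\dhurw_{R}[[x]])\cong\rho_{t}\Opa(\dhurw_{R}[[x]])$ recorded above.

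First I would verify that $L$ intertwines the two pairs of operations, namely
\[
L(u\boxplus v)=L(u)\boxplus_{q}L(v),\qquad L(u\circledast v)=L(u)\circledast_{q}L(v)
\]
for all $u,v\in\rho_{t}\Opa(\dhurw_{R}[[x]])$. This is read off directly by comparing the defining formulas: in $u\boxplus v$ and $u\circledast v$ only the $x$-coefficients are modified (through $H_{n}(f,g)$ and through the $\mathcal{A}_{l,n}$), while each monomial $t^{n}/n!$ is left untouched; applying $L$ sends $t^{n}/n!$ to $q_{n}(t)/n!$ term by term, which is precisely what the definitions of $\boxplus_{q}$ and $\circledast_{q}$ prescribe. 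Since $L$ is linear and acts only on the variable $t$, the two displayed identities follow, so $L$ is a ring isomorphism from $(\rho_{t}\Opa(\dhurw_{R}[[x]]),\boxplus,\circledast)$ onto $(\rho_{q}\Opa(\dhurw_{R}[[x]]),\boxplus_{q},\circledast_{q})$.

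Because the source is a commutative ring with unity (established earlier in this section), so is the target: commutativity, associativity and distributivity of $\boxplus_{q}$ and $\circledast_{q}$ are inherited from the corresponding properties of $\boxplus$ and $\circledast$ via the bijection $L$, with no fresh axiom-checking required. It only remains to pin down the units. The additive identity $0=\rho_{t}\Opa(0)$ is fixed by the linear map $L$, so $\rho_{q}\Opa(0)=0$ is the zero of the new ring. The multiplicative identity $\rho_{t}\Opa(\textbf{e}_{\Delta})=t$ is carried to $L(t)=q_{1}(t)$, and by the defining properties of the basic sequence of a delta operator (with the normalization in force here) one has $q_{1}(t)=t$; hence $\rho_{q}\Opa(\textbf{e}_{\Delta})=q_{1}(t)=t$ is the unit of $\rho_{q}\Opa(\dhurw_{R}[[x]])$.

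The step that deserves the most care is the intertwining identity: one must check that nothing in the recursions defining $H_{n}$ and the coefficients $\mathcal{A}_{l,n}$ involves the variable $t$, so that these recursions pass through $L$ unchanged, and that the coefficientwise extension of $L$ is compatible with the (possibly infinite) $t$-expansions occurring in $\boxplus_{q}$ and $\circledast_{q}$. Granting this, the whole statement is a formal transport of structure along the isomorphism $L$, and one can equally well view $\rho_{q}\Opa(\dhurw_{R}[[x]])$ as yet another avatar of the Hurwitz ring $\hurw_{R}[[x]]$.
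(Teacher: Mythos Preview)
Your proof is correct and follows essentially the same transport-of-structure argument as the paper: the paper observes directly that $\rho_{q}\Opa(\Delta f)\boxplus_{q}\rho_{q}\Opa(\Delta g)=\rho_{q}[\Opa(\Delta f)\boxplus\Opa(\Delta g)]$ and likewise for $\circledast_{q}$, so that $\rho_{q}$ carries the ring structure over from $\Opa(\dhurw_{R}[[x]])$, whereas you factor through the intermediate ring $\rho_{t}\Opa(\dhurw_{R}[[x]])$ via the umbral map $L$. Since $\rho_{q}=L\circ\rho_{t}$ and $\rho_{t}$ is already an isomorphism, the two routes are the same argument up to which already-established isomorphism one composes with.
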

\begin{proof}
De (\ref{def_bplus}) y (\ref{def_cdast}) se sigue que
\begin{eqnarray*}
\rho_{q}\Opa(\Delta f)\boxplus_{q}\rho_{q}\Opa(\Delta g)&=&\rho_{q}[\Opa(\Delta f)\boxplus\Opa(\Delta g)]\\
\rho_{q}\Opa(\Delta f)\circledast_{q}\rho_{q}\Opa(\Delta g)&=&\rho_{q}[\Opa(\Delta f)\circledast\Opa(\Delta g)]
\end{eqnarray*}
Luego la prueba sigue igual a la que se hizo en [?].
\end{proof}

As\'i el mapa $\rho_{q}$ es un isomorfismo de anillos.

Denote $L\hurw_{R}[[x]]$ la imagen de $\hurw_{R}[[x]]$ por el operador umbral. Extenderemos la
definici\'on de semi sistemas din\'amicos a operadores delta

\begin{definition}
Sea $Q$ un operador delta con conjunto base $q=(q_{n}(t))_{n\in\N}$. 
Definimos un \textbf{semi sistema dinámico tipo delta} como la terna $(R,L\hurw_{S}[[t]],\Psi_{Q})$
donde $R$ es el conjunto de tiempos, $L\hurw_{S}[[t]]$ el espacio de fases, con 
$S=\Opa(\Delta\hurw_{R}[[x]])$ y $\Psi_{Q}$ es el \textbf{semi flujo delta} 
\begin{equation}
\Psi_{Q}(t,x,\Delta f(x))=\rho_{q}\Opa(\Delta f(x))
\end{equation}
esto es, $\Psi_{Q}$ es el mapa
$\Psi_{Q}:R\times L\hurw_{S}[[t]]\times\Delta\hurw_{R}[[x]]\rightarrow L\hurw_{S}[[t]]$ .
\end{definition}

Esta definici\'on junto con que $\rho_{q}\Opa(\dhurw_{R}[[x]])$ es un anillo nos lleva a la siguiente
definici\'on

\begin{definition}
Sea $Q$ un operador delta con conjunto base $q=(q_{n}(t))_{n\in\N}$. 
Definimos el anillo de semi sistemas din\'amicos tipo delta como el conjunto
\begin{equation}
(R,L\hurw_{S}[[x]],\rho_{q}\Opa(\dhurw_{R}[[x]]))=\{(R,L\hurw_{S}[[x]],\Psi_{Q}):\Psi_{Q}=\rho_{q}\Opa(\Delta f)\}
\end{equation}
donde 
\begin{small}
\begin{eqnarray*}
(R,L\hurw_{S}[[x]],\rho_{q}\Opa(\Delta f))\boxplus_{q}(R,L\hurw_{S}[[x]],\rho_{q}\Opa(\Delta g))&=&(R,L\hurw_{S}[[x]],\rho_{q}\Opa(\Delta(f+g)))
\end{eqnarray*}
y
\begin{eqnarray*}
(R,L\hurw_{S}[[x]],\rho_{q}\Opa(\Delta f))\circledast_{q}(R,L\hurw_{S}[[x]],\rho_{q}\Opa(\Delta g))&=&(R,L\hurw_{S}[[x]],\rho_{q}\Opa(\Delta(f\ast g)))
\end{eqnarray*}
\end{small}
y en donde $(R,L\hurw_{S}[[x]],0)$ y $(R,L\hurw_{S}[[x]],t)$ son las unidades con respecto a 
$\boxplus_{q}$ y $\circledast_{q}$ respectivamente.
\end{definition}

Como veremos m\'as adelante el anillo $(R,L\hurw_{S}[[x]],\rho_{q}\Opa(\dhurw_{R}[[x]]))$ contiene
todas las soluciones a las ecuaciones tipo delta $Q\Phi=f(\Phi)$ para cada 
función $f(x)$ en $\hurw_{R}[[x]]$. Gracias a este anillo será posible descomponer las soluciones 
de una ecuación diferencial aut\'onoma tipo delta de orden uno en soluciones m\'as simples.\\ 

Un tipo importante de ecuaciones tipo delta se dan cuando $f(x)$ es un polinomio. Si $f(x)$
es separable, entonces podemos factorizar la soluci\'on de $Q\Phi=f(\Phi)$ usando la factorizaci\'on
de $\rho_{q}\Opa(\Delta f(x))$ en el anillo $\rho_{q}\Opa(\dhurw_{R}[[x]])$. Primero calcularemos 
$\rho_{q}\Opa(\Delta(ax+b))$ 

\begin{proposition}\label{prop_pol_lin}
Sea $Q$ un operador delta con conjunto base $q=(q_{n}(t))_{n\in\N}$ y sea $Q=p(\delta)$. Entonces
\begin{equation}
\rho_{q}\Opa(\Delta(ax+b))=\frac{1}{a}(ax+b)\left(e^{tp^{-1}(a)}-1\right),
\end{equation}
$a\neq0$ y en donde $p^{-1}$ es la serie de potencia formal inversa de $p$.
\end{proposition}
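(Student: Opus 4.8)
The plan is to reduce everything to the linear autonomous differential equation $\delta_t\phi = a\phi + b$, which is already understood via the constructions of [3], and then transport the answer through the umbral operator $L$ attached to $q$.

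First I would determine the autonomous polynomials attached to $f(x)=ax+b$. By the flow properties ($\Phi(0,x,\Delta f)=x$ together with $\delta_t\Phi = f(\Phi)$), the series $\Phi(t,x,\Delta(ax+b))$ is the unique formal solution in $\hurw_{S}[[t]]$ of $\delta_t\phi = a\phi+b$ with $\phi|_{t=0}=x$; since $a\neq 0$ this solution is
\[
\Phi(t,x,\Delta(ax+b)) \;=\; \frac{1}{a}(ax+b)\,e^{at}-\frac{b}{a} \;=\; x+\frac{1}{a}(ax+b)\bigl(e^{at}-1\bigr),
\]
so that $\rho_{t}\Opa(\Delta(ax+b)) = \Phi(t,x,\Delta(ax+b))-x = \frac{1}{a}(ax+b)(e^{at}-1) = \sum_{n\geq 1}(ax+b)a^{\,n-1}\,t^{n}/n!$. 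Reading off coefficients, $A_{n}([\delta^{n-1}(ax+b)]) = (ax+b)a^{\,n-1}$ for every $n\geq 1$; this also follows by a short induction directly from the defining recursion for the $A_{n}$, since $\delta(ax+b)=a$ and $\delta^{j}(ax+b)=0$ for $j\geq 2$ make that recursion collapse to $A_{n+1}([\delta^{n}(ax+b)]) = a\,A_{n}([\delta^{n-1}(ax+b)])$.

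Next I would apply the umbral representation $L$ of the basic sequence $q=(q_{n}(t))_{n\in\N}$ of $Q$. Using $\rho_{q}\Opa(\Delta f)=L\bigl(\rho_{t}\Opa(\Delta f)\bigr)$, the linearity of $L$ over the coefficient ring $\hurw_{R}[[x]]$, the relations $L(t^{n})=q_{n}(t)$, and $q_{0}(t)=1$, one gets
\[
\rho_{q}\Opa(\Delta(ax+b)) \;=\; \frac{1}{a}(ax+b)\sum_{n\geq 1}a^{\,n}\,\frac{q_{n}(t)}{n!} \;=\; \frac{1}{a}(ax+b)\Bigl(\sum_{n\geq 0}a^{\,n}\,\frac{q_{n}(t)}{n!}-1\Bigr).
\]
Finally, writing $Q=p(\delta)$ and invoking the exponential generating function identity (\ref{eqn_exp_inver}), namely $\sum_{n\geq 0}\frac{q_{n}(t)}{n!}u^{n}=e^{tp^{-1}(u)}$, and specializing $u=a$, the last display becomes $\frac{1}{a}(ax+b)\bigl(e^{tp^{-1}(a)}-1\bigr)$, which is precisely the asserted formula.

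The only genuinely delicate point is the closed-form evaluation $A_{n}([\delta^{n-1}(ax+b)])=(ax+b)a^{\,n-1}$, i.e. solving the linear equation purely formally (in the variable $t$) over the characteristic-zero integral domain $R$; once that is in hand, the remainder is just the linearity of $L$ together with a single substitution into (\ref{eqn_exp_inver}).
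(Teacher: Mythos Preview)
Your proposal is correct and follows essentially the same route as the paper's own proof: both compute $A_{n}([\delta^{n-1}(ax+b)])=(ax+b)a^{n-1}$, form the series $\sum_{n\geq1}(ax+b)a^{n-1}q_{n}(t)/n!$, and close with the generating-function identity (\ref{eqn_exp_inver}). The only cosmetic difference is that the paper obtains the $A_{n}$ by directly applying the operators $\Delta$ and $\Opa$ in sequence, whereas you first solve the linear ODE for $\Phi$ and read off the coefficients (mentioning the inductive verification as an alternative); these are equivalent computations.
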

\begin{proof}
Aplicamos los operadores $\Delta$, $\Opa$ y $\rho_{q}$ a la funci\'on $f(x)=ax+b$. As\'i tenemos
\begin{eqnarray*}
\rho_{q}\Opa(\Delta(ax+b))&=&\rho_{q}\Opa(ax+b,a,0,...)\\
&=&\rho_{q}(ax+b,a(ax+b),a^{2}(ax+b),...)\\
&=&\sum_{n=1}^{\infty}a^{n-1}(ax+b)\frac{q_{n}(t)}{n!}\\
&=&\frac{1}{a}(ax+b)\sum_{n=1}^{\infty}a^{n}\frac{q_{n}(t)}{n!}\\
&=&\frac{1}{a}(ax+b)\left(e^{tp^{-1}(a)}-1\right)
\end{eqnarray*}
en donde hemos usado (\ref{eqn_exp_inver}).
\end{proof}

Ahora si $f(x)$ factoriza como $(a_{1}x+b_{1})(a_{2}x+b_{2})\cdots(a_{n}x+b_{n})$ entonces
tenemos

\begin{theorem}\label{theo_poly_prod}
Sea $Q$ un operador delta con conjunto base $q=(q_{n}(t))_{n\in\N}$ y sea $Q=p(\delta)$. Suponga
$f(x)=\prod_{k=1}^{n}(a_{k}x+b_{k})$ un polinomio separable en $\F[x]\subset\hurw_{\F}[[x]]$.
Entonces
\begin{equation}
\rho_{q}\Opa(\Delta f(x))=\bcastq_{k=1}^{n}\frac{1}{a_{k}}(a_{k}x+b_{k})\left(e^{tp^{-1}(a_{k})}-1\right)
\end{equation}
en donde $p^{-1}$ es la serie de potencia formal inversa de $p$.
\end{theorem}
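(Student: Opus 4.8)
The plan is to transport the multiplicative factorization $f=\prod_{k=1}^{n}(a_{k}x+b_{k})$ along the chain of ring isomorphisms $\hurw_{\F}[[x]]\cong\dhurw_{\F}[[x]]\cong\Opa(\dhurw_{\F}[[x]])\cong\rho_{q}\Opa(\dhurw_{\F}[[x]])$ established in the Preliminares, turning it first into a Hurwitz product of sequences, then into a $\circledast$-product, then into a $\circledast_{q}$-product, and finally to evaluate each $\circledast_{q}$-factor with Proposition~\ref{prop_pol_lin}.

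First I would set $g_{k}(x)=a_{k}x+b_{k}$, so that $f=g_{1}g_{2}\cdots g_{n}$ in $\F[x]\subseteq\hurw_{\F}[[x]]$, and note $a_{k}\neq0$ for every $k$ since $f$ is separable (each factor has nonzero leading coefficient). Using $\Delta(gh)=\Delta g\ast\Delta h$ and an induction on $n$, this gives $\Delta f=\Delta g_{1}\ast\Delta g_{2}\ast\cdots\ast\Delta g_{n}$ in $\dhurw_{\F}[[x]]$. Next, because $\Opa$ is a ring isomorphism with $\Opa(\Delta g\ast\Delta h)=\Opa(\Delta g)\circledast\Opa(\Delta h)$, the factorization propagates to $\Opa(\Delta f)=\Opa(\Delta g_{1})\circledast\cdots\circledast\Opa(\Delta g_{n})$; and since $\rho_{q}$ is a ring isomorphism satisfying $\rho_{q}[\Opa(\Delta g)\circledast\Opa(\Delta h)]=\rho_{q}\Opa(\Delta g)\circledast_{q}\rho_{q}\Opa(\Delta h)$ (the identity recorded in the proof of the preceding theorem on the ring structure of $\rho_{q}\Opa(\dhurw_{R}[[x]])$), I obtain
\begin{equation*}
\rho_{q}\Opa(\Delta f)=\bcastq_{k=1}^{n}\rho_{q}\Opa(\Delta g_{k})=\bcastq_{k=1}^{n}\rho_{q}\Opa(\Delta(a_{k}x+b_{k})).
\end{equation*}

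Finally I would apply Proposition~\ref{prop_pol_lin} to each factor, which yields $\rho_{q}\Opa(\Delta(a_{k}x+b_{k}))=\frac{1}{a_{k}}(a_{k}x+b_{k})\left(e^{tp^{-1}(a_{k})}-1\right)$, and substitute to conclude. The only delicate points are bookkeeping: one must check that the iterated $\ast$, $\circledast$, and $\circledast_{q}$ products are well defined and associative so that the finite-product symbol $\bcastq_{k=1}^{n}$ is unambiguous — this is immediate from each structure being a commutative ring — and one must observe that separability enters only to secure the hypothesis $a_{k}\neq0$ needed by Proposition~\ref{prop_pol_lin}. I do not expect any substantive obstacle beyond this, since the content of the theorem is carried entirely by the isomorphism theorems already in hand.
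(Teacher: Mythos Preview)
Your proposal is correct and follows essentially the same approach as the paper, which simply states ``Por aplicaci\'on directa de la Proposici\'on~\ref{prop_pol_lin}''; you have merely unpacked what that direct application means by tracing the factorization through the isomorphisms $\Delta$, $\Opa$, and $\rho_{q}$ before invoking the proposition on each linear factor. One minor quibble: separability of $f$ guarantees that the roots $-b_{k}/a_{k}$ are distinct, not that $a_{k}\neq0$; the nonvanishing of each $a_{k}$ is really implicit in writing $f$ as a product of genuinely linear factors, and (as you correctly observe) separability plays no further role in the argument.
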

\begin{proof}
Por aplicaci\'on directa de la Proposici\'on \ref{prop_pol_lin}.
\end{proof}

Ahora mostraremos otra forma de calcular $Q\Phi=f(\Phi)$. Suponga que 
$f(x)=\sum_{k=0}^{n}a_{k}x^{k}$ es un polinomio de grado $n$ en $R[x]\subset\hurw_{R}[[x]]$.
Queremos calcular el polinomio $\rho_{q}\Opa(\Delta f(x))$. Usaremos la siguiente notaci\'on
\begin{eqnarray}\label{eqn_pot_rho}
\rho_{q}\Opa[\Delta x]_{\circledast_{q}}^{k}&=&\rho_{q}\Opa[\Delta x]\circledast_{q}\cdots\circledast_{q}\rho_{q}\Opa[\Delta x]
\end{eqnarray}

Primero calcularemos los monomios $\rho_{q}\Opa[\Delta x]_{\circledast_{q}}^{k}$

\begin{proposition}\label{lemma_potencia}
Sea $Q$ un operador delta con conjunto base $q=(q_{n}(t))_{n\in\N}$ y sea $Q=p(\delta)$. Sea 
$L$ la representaci\'on umbral de $q_{n}(t)$. Cuando $k=0$ tenemos que 
$\rho_{q}\Opa[\Delta x]_{\circledast_{q}}^{0}=at$ y cuando $k=1$, claramente
$\rho_{q}\Opa[\Delta x]_{\circledast_{q}}^{1}=x(e^{tp^{-1}(a)}-1)$. Para $k\geq2$ se tiene
\begin{equation}
\rho_{q}\Opa[\Delta x]_{\circledast_{q}}^{k}=L\left(\frac{x}{\sqrt[k-1]{1-a(k-1)x^{k-1}t}}-x\right)
\end{equation}
\end{proposition}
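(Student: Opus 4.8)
El plan es empujar el c\'alculo a lo largo de la cadena de isomorfismos de anillos
\begin{equation*}
\hurw_{\F}[[x]]\xrightarrow{\ \Delta\ }\dhurw_{\F}[[x]]\xrightarrow{\ \Opa\ }\Opa(\dhurw_{\F}[[x]])\xrightarrow{\ \rho_{q}\ }\rho_{q}\Opa(\dhurw_{\F}[[x]])
\end{equation*}
para reducir la $\circledast_{q}$-potencia a una potencia ordinaria dentro de $\hurw_{\F}[[x]]$, en donde se vuelve simplemente un monomio. En efecto, como $\Delta f\ast\Delta g=\Delta(fg)$, $\Opa(\Delta f)\circledast\Opa(\Delta g)=\Opa(\Delta(fg))$ y $\rho_{q}$ es el isomorfismo de anillos del teorema anterior que env\'ia $\circledast$ a $\circledast_{q}$, iterando $k$ veces se obtiene $\rho_{q}\Opa[\Delta x]_{\circledast_{q}}^{k}=\rho_{q}\Opa(\Delta(x^{k}))$; arrastrando adem\'as el escalar $a$ del bloque lineal mediante la propiedad $\Phi(t,x,c\Delta f)=\Phi(ct,x,\Delta f)$ del flujo, el monomio relevante es $\Delta(ax^{k})$, y de ah\'i vendr\'a el coeficiente $a$ de la f\'ormula.

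Hecha esta reducci\'on, usar\'ia que $\rho_{q}\Opa(\Delta g)=L(\rho_{t}\Opa(\Delta g))=L(\Phi(t,x,\Delta g)-x)$ y que, por \eqref{flujo_diferencial} y la propiedad 3 del flujo, $\Phi(t,x,\Delta g)$ es la \'unica soluci\'on en series formales $\phi\in\hurw_{\F}[[x]][[t]]$ de $\phi'=g(\phi)$ con $\phi|_{t=0}=x$. Tomando $g(x)=ax^{k}$ con $k\geq2$ y separando variables en $\phi'=a\phi^{k}$ se llega a $\phi^{1-k}=x^{1-k}\big(1-a(k-1)x^{k-1}t\big)$, es decir,
\begin{equation*}
\phi=\frac{x}{\sqrt[k-1]{1-a(k-1)x^{k-1}t}},
\end{equation*}
serie bien definida en $\hurw_{\F}[[x]][[t]]$ v\'ia el desarrollo binomial $\sum_{n\geq0}\binom{-1/(k-1)}{n}\big(-a(k-1)x^{k-1}t\big)^{n}$, cuyos coeficientes existen porque $\mathrm{char}\,\F=0$. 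Restando $x$ y aplicando $L$ ---que es $\F$-lineal y act\'ua coeficiente a coeficiente sobre las potencias de $t$, con $t^{n}\mapsto q_{n}(t)$--- se obtiene la f\'ormula del enunciado para $k\geq2$. Los casos restantes son directos: para $k=1$ es la Proposici\'on \ref{prop_pol_lin} con $b=0$, y para $k=0$ el $\circledast_{q}$-producto vac\'io es la unidad $t$ (salvo el escalar, pues la soluci\'on de $\phi'=a$, $\phi|_{t=0}=x$, es $\phi=x+at$).

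El \'unico punto no formal ---y el que espero sea el principal obst\'aculo--- es justificar que la serie radical anterior coincide efectivamente con $\Phi(t,x,\Delta(ax^{k}))$, esto es, que la recursi\'on que define los polinomios aut\'onomos $A_{n}([\delta^{n-1}(ax^{k})])$ reproduce exactamente $n!$ veces los coeficientes de $t^{n}$ de dicho desarrollo binomial. Esto se sigue de la unicidad de las soluciones en series formales de $\phi'=f(\phi)$ con $\phi|_{t=0}=x$ ---la recursi\'on de los coeficientes es triangular---, establecida en [3]; concedido esto, basta verificar por derivaci\'on directa que la serie expl\'icita satisface la ecuaci\'on y la condici\'on inicial, c\'alculo rutinario v\'alido en caracter\'istica cero.
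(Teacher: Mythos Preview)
Your approach is correct and follows the same structure as the paper's own proof: both arguments reduce to the identity $\rho_{q}\Opa[\Delta x]_{\circledast_{q}}^{k}=L\big(\rho_{t}\Opa[\Delta x]_{\circledast}^{k}\big)$ and then invoke the closed form for $\rho_{t}\Opa[\Delta x]_{\circledast}^{k}$. The only difference is that the paper simply cites this closed form from~[3], whereas you re-derive it by solving $\phi'=a\phi^{k}$ via separation of variables and appealing to uniqueness of formal power series solutions; this makes your version self-contained but is otherwise the same argument.
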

\begin{proof}
En [?] fue mostrado que 
\begin{equation*}
\rho_{t}\Opa[\Delta x]_{\circledast}^{k}=\frac{x}{\sqrt[k-1]{1-a(k-1)x^{k-1}t}}-x.
\end{equation*}
Como $L$ es la representaci\'on umbral de $q_{n}(t)$, entonces 
$\rho_{q}\Opa[\Delta x]_{\circledast}^{k}=L(\rho_{t}\Opa[\Delta x]_{\circledast}^{k})$
y todo sigue de aqu\'i.
\end{proof}

Por aplicación directa de esta Proposici\'on tenemos el siguiente resultado

\begin{theorem}\label{theo_poly_sum}
Sea $Q$ un operador delta con conjunto base $q=(q_{n}(t))_{n\in\N}$ y sea $Q=p(\delta)$. Suponga
$f(x)=\sum_{k=1}^{n}a_{k}x^{k}$ un polinomio $R[x]\subset\hurw_{R}[[x]]$.
Entonces
\begin{eqnarray}
&&\rho_{q}\Opa(\Delta f(x))=\bboxq_{k=0}^{n}\rho_{q}\Opa[a_{k}\Delta x]_{\circledast_{q}}^{k}\nonumber\\
&&=a_{0}t\boxplus_{q}x(e^{tp^{-1}(a_{1})}-1)\boxplus_{q}\bboxq_{k=2}^{n}L\left(\frac{x}{\sqrt[k-1]{1-a_{k}(k-1)x^{k-1}t}}-x\right)
\end{eqnarray}
\end{theorem}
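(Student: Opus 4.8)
The plan is to observe that the map $f\mapsto\rho_{q}\Opa(\Delta f)$ is a ring isomorphism from $\hurw_{R}[[x]]$, equipped with its ordinary sum and product, onto $(\rho_{q}\Opa(\dhurw_{R}[[x]]),\boxplus_{q},\circledast_{q})$. Indeed $\Delta$ is a ring isomorphism $\hurw_{R}[[x]]\to\dhurw_{R}[[x]]$, the autonomous operator $\Opa$ is a ring isomorphism $\dhurw_{R}[[x]]\to\Opa(\dhurw_{R}[[x]])$ carrying $+$ to $\boxplus$ and $\ast$ to $\circledast$, and $\rho_{q}$ is a ring isomorphism $\Opa(\dhurw_{R}[[x]])\to\rho_{q}\Opa(\dhurw_{R}[[x]])$ carrying $\boxplus$ to $\boxplus_{q}$ and $\circledast$ to $\circledast_{q}$ (this is the theorem establishing that $(\rho_{q}\Opa(\dhurw_{R}[[x]]),\boxplus_{q},\circledast_{q})$ is a commutative ring). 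Composing the three, ordinary addition in $\hurw_{R}[[x]]$ is sent to $\boxplus_{q}$ and ordinary multiplication to $\circledast_{q}$; in particular a finite sum goes to the corresponding finite $\bboxq$ and a $k$-fold product to the $k$-fold $\circledast_{q}$-power.

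First I would write $f(x)=\sum_{k=0}^{n}a_{k}x^{k}$ as a finite ordinary sum of monomials in $\hurw_{R}[[x]]$. Since the sum is finite there is no convergence question, and since the target is a commutative ring the finite $\boxplus_{q}$-sum below is well defined. Applying the isomorphism termwise gives
\[
\rho_{q}\Opa(\Delta f(x))=\bboxq_{k=0}^{n}\rho_{q}\Opa(\Delta(a_{k}x^{k}))=\bboxq_{k=0}^{n}\rho_{q}\Opa[a_{k}\Delta x]_{\circledast_{q}}^{k},
\]
the second equality being the notation (\ref{eqn_pot_rho}) for the image of the monomial $a_{k}x^{k}$.

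It then remains to evaluate the three kinds of summand. For $k=0$ one has $\Delta a_{0}=(a_{0},0,0,\dots)$ because $\delta a_{0}=0$, hence $\Opa(\Delta a_{0})=(a_{0},0,0,\dots)$ and $\rho_{q}\Opa(\Delta a_{0})=a_{0}q_{1}(t)=a_{0}t$. For $k=1$, Proposition \ref{prop_pol_lin} with $b=0$ gives $\rho_{q}\Opa(\Delta(a_{1}x))=x(e^{tp^{-1}(a_{1})}-1)$. For $k\geq2$, Proposition \ref{lemma_potencia} gives $\rho_{q}\Opa[a_{k}\Delta x]_{\circledast_{q}}^{k}=L\!\left(\dfrac{x}{\sqrt[k-1]{1-a_{k}(k-1)x^{k-1}t}}-x\right)$. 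Substituting these into the displayed identity and collecting terms produces the claimed formula. The argument is purely formal: all the substance is in Propositions \ref{prop_pol_lin} and \ref{lemma_potencia} and in the isomorphism theorem, and the only point needing attention is the bookkeeping of the scalars, i.e. reading $\rho_{q}\Opa[a_{k}\Delta x]_{\circledast_{q}}^{k}$ as $\rho_{q}\Opa(\Delta(a_{k}x^{k}))$ (the scalar multiplying the whole $k$-fold product, not each factor) so that Proposition \ref{lemma_potencia} applies verbatim.
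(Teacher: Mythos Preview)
Your proof is correct and follows essentially the same approach as the paper: the paper's own proof is a one-line appeal to Proposition~\ref{lemma_potencia} (``por aplicaci\'on directa de esta Proposici\'on''), and you have simply spelled out explicitly the underlying ring isomorphism that sends $+$ to $\boxplus_{q}$ and $\cdot$ to $\circledast_{q}$, together with the termwise evaluations from Propositions~\ref{prop_pol_lin} and~\ref{lemma_potencia}. Your remark on the bookkeeping of the scalar $a_{k}$ is a useful clarification of the paper's notation.
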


Finalmente damos la definici\'on de sistemas din\'amicos tipo delta

\begin{definition}
Sea $Q$ un operador delta con conjunto base $q=(q_{n}(t))_{n\in\N}$ y denote $L$ la representaci\'on
umbral de $q_{n}(t)$. Definimos un \textbf{sistema dinámico tipo delta} sobre el anillo $R$ como 
la terna $(R,L\hurw_{S}[[t]],\Phi_{Q})$ donde $R$ es el \textbf{conjunto de tiempos}, 
$L\hurw_{S}[[t]]$ el \textbf{espacio de fases} y $\Phi_{Q}$ es el \textbf{flujo delta} 
\begin{equation}\label{flujo_delta}
\Phi_{Q}(t,x,\Delta f(x))=x+\sum_{n=1}^{\infty}A_{n}([\delta^{(n-1)}f(x)])\dfrac{q_{n}(t)}{n!}.
\end{equation}
esto es, $\Phi_{Q}$ es el mapa definido
por $\Phi_{Q}:R\times L\hurw_{S}[[t]]\times\Delta\hurw_{R}[[x]]\rightarrow L\hurw_{S}[[t]]$ donde 
$S=\hurw_{R}[[x]]$.
\end{definition}

De este modo 

\begin{equation}
\Phi_{Q}(t,x,\Delta f(x))=x+\rho_{q}\Opa(\Delta f(x))
\end{equation}
y junto con los Teoremas \ref{theo_poly_prod} y \ref{theo_poly_sum} encontramos la soluci\'on a la
ecuaci\'on $Q\Phi=f(\Phi)$ cuando $f(x)$ es un polinomio.

La representaci\'on umbral $L$ de $\Phi_{Q}(t,x,\Delta f(x))$ implica que

\begin{enumerate}
\item $\Phi_{Q}(0,x,\Delta f(x))=0$
\item $\Phi_{Q}(t+s,x,\Delta f(x))=\Phi_{Q}(t,\Phi_{Q}(s,x,\Delta f(x)),\Delta f(x))$
\item $Q\Phi_{Q}(t,x,\Delta f(x))=f(x)\delta_{x}\Phi_{Q}(t,x,\Delta f(x))=f(\Phi_{Q})$.
\end{enumerate}

Podemos reescribir (\ref{flujo_delta}) de la siguiente forma
\begin{equation}\label{eqn_delta_rep}
\Phi_{Q}(t,x,\Delta f(x))=\sum_{n=0}^{\infty}Q^{n}\Phi_{Q}(0,x,\Delta f(x))\frac{q_{n}(t)}{n!}
\end{equation}
Llamamos a (\ref{eqn_delta_rep}) una \textbf{delta representaci\'on formal} de 
$\Phi_{Q}(t,x,\Delta f(x))$. 

El \'ultimo resultado de esta secci\'on es mostrar que el conjunto de todos los flujos tipo delta
forman un grupo

\begin{theorem}
Fije una $f(x)$ en $\hurw_{R}[[x]]$. Denote $\mathcal{F}$ el conjunto de flujos tipo delta $\Phi_{Q}$
soluci\'on de $Q\Phi_{Q}=f(\Phi_{Q})$ para todo $Q\in\hurw_{R}[[\delta]]$. Tome operadores delta 
$A,B\in\hurw_{R}[[\delta]]$ con polinomios base $(a_{n}(t))_{n\in\N}$ y $(b_{n}(t))_{n\in\N}$,
respectivamente. Defina la composici\'on $\circledcirc$ de flujos deltas $\Phi_{A}$ y $\Phi_{B}$ 
por $\Phi_{A}\circledcirc\Phi_{B}=\Phi_{C}$, donde
\begin{equation}
\Phi_{C}=x+\sum_{n=1}^{\infty}A_{n}([\delta^{n-1}f(x)])\frac{c_{n}(t)}{n!}.
\end{equation}
y $c_{n}(t)=a_{n}(\underline{b}(t))$. Entonces $(\mathcal{F},\circledcirc)$ es un grupo de Lie
con unidad $\Phi$.
\end{theorem}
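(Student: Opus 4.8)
The plan is to prove the statement by transport of structure: the assignment that sends a delta-type flow $\Phi_{Q}$ to the base sequence $q=(q_{n}(t))_{n\in\N}$ of its operator $Q$ will be shown to intertwine $\circledcirc$ with the umbral composition $\circ$, and since the set $\mathcal{U}$ of base sequences is already a Lie group under $\circ$, the group and Lie structures descend to $\mathcal{F}$. First I would fix the dictionary between delta operators and base sequences: a delta operator $Q\in\hurw_{R}[[\delta]]$ normalised so that $Qt=1$ (equivalently $q_{1}(t)=t$) is the same datum as a formal power series $Q=q(\delta)$ with $q(0)=0$ and $q'(0)=1$, and it is uniquely recovered from its base sequence. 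Under this dictionary $\delta$ corresponds to the sequence $(t^{n})_{n\in\N}$, the identity of $(\mathcal{U},\circ)$, and the flow it produces is
\[
\Phi_{\delta}=x+\sum_{n\geq1}A_{n}([\delta^{n-1}f(x)])\frac{t^{n}}{n!}=\Phi .
\]
By the first theorem of this section, $\Phi_{Q}=L[\Phi]$ solves $Q\Phi_{Q}=f(\Phi_{Q})$ for every delta operator $Q$, so every $\Phi_{Q}$ lies in $\mathcal{F}$, and $\mathcal{F}$ consists precisely of these flows.

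Next I would check that $\circledcirc$ is an internal and associative operation on $\mathcal{F}$ by pushing it through this dictionary. Let $A=\alpha(\delta)$ and $B=\beta(\delta)$ be delta operators with base sequences $a$ and $b$. The power series $\alpha\circ\beta$ obtained by substituting $\beta(\delta)$ into $\alpha$ again vanishes at $0$ and has derivative $1$ there, so it defines a delta operator $C\in\hurw_{R}[[\delta]]$; by the composition rule recalled in the Preliminaries (if $P=\alpha(\delta)$ and $Q=\beta(\delta)$ have base sequences $p,q$, then $p_{n}(\underline{q}(t))$ is the base sequence of $\alpha(\beta(\delta))$), the base sequence of $C$ is $a\circ b=(a_{n}(\underline{b}(t)))_{n\in\N}$. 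Hence $c_{n}(t)=a_{n}(\underline{b}(t))$ and $\Phi_{A}\circledcirc\Phi_{B}=\Phi_{C}$, which again solves $C\Phi_{C}=f(\Phi_{C})$, so $\Phi_{A}\circledcirc\Phi_{B}\in\mathcal{F}$; and writing $\Theta$ for the base-sequence map, $\Theta(\Phi_{A}\circledcirc\Phi_{B})=a\circ b=\Theta(\Phi_{A})\circ\Theta(\Phi_{B})$. Associativity of $\circledcirc$ then comes from associativity of $\circ$ on $\mathcal{U}$.

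The remaining group axioms follow the same way. Because $(t^{n})_{n\in\N}$ is the two-sided unit of $(\mathcal{U},\circ)$ and $\Theta(\Phi)=(t^{n})_{n\in\N}$, one has $a\circ(t^{n})_{n}=a=(t^{n})_{n}\circ a$, i.e.\ $\Phi_{A}\circledcirc\Phi=\Phi_{A}=\Phi\circledcirc\Phi_{A}$, so $\Phi$ is the unit of $(\mathcal{F},\circledcirc)$. For the inverse of $\Phi_{A}$ with $A=\alpha(\delta)$, I would take $A^{-1}=\alpha^{-1}(\delta)$, where $\alpha^{-1}$ is the compositional inverse power series (again of the normalised form, hence a delta operator); its base sequence is the umbral inverse of $a$ since $\alpha(\alpha^{-1}(x))=x$, so $\Phi_{A}\circledcirc\Phi_{A^{-1}}=\Phi_{\delta}=\Phi=\Phi_{A^{-1}}\circledcirc\Phi_{A}$. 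Thus $(\mathcal{F},\circledcirc)$ is a group isomorphic to $(\mathcal{U},\circ)$ through $\Theta$, with unit $\Phi$; and since $(\mathcal{U},\circ)$ is a Lie group, pulling back its smooth structure along $\Theta$ exhibits $(\mathcal{F},\circledcirc)$ as a Lie group with unit $\Phi$.

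The step I expect to be the main obstacle is the well-definedness of this picture on the set $\mathcal{F}$ of flows, namely that the parametrisation $Q\mapsto\Phi_{Q}$ is injective so that $\circledcirc$ really descends from operators to flows and $\Theta$ is a bijection onto $\mathcal{U}$. If one regards $\mathcal{F}$ as indexed by $\hurw_{R}[[\delta]]$, injectivity is automatic; otherwise, for non-constant $f$ I would argue from $\Phi_{Q_{1}}=\Phi_{Q_{2}}=:\Psi\Rightarrow(Q_{1}-Q_{2})\Psi=0$, using that $\Psi-x=\sum_{n\geq1}A_{n}([\delta^{n-1}f])q_{n}(t)/n!$ is a nonzero $t$-series with $A_{1}=f\neq0$ and $A_{2}=ff'\neq0$, to conclude that the shift-invariant operator $Q_{1}-Q_{2}$ vanishes; when $f$ is constant every flow equals $x+ft$ and $\mathcal{F}$ is the trivial one-point group, still with unit $\Phi$. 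Granting this, internality, the group axioms and the transported Lie structure are all a routine translation of the corresponding properties of $(\mathcal{U},\circ)$ already recorded in the Preliminaries.
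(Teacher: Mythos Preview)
Your proof follows essentially the same route as the paper: both define the map $\mathcal{F}\to\mathcal{U}$ sending $\Phi_{Q}$ to the base sequence of $Q$ (the paper calls it $\xi$, you call it $\Theta$), check that it intertwines $\circledcirc$ with the umbral composition $\circ$, and then transport the Lie-group structure from $\mathcal{U}$. You are in fact more careful than the paper about the well-definedness/injectivity of this map (the case analysis on constant versus non-constant $f$), which the paper's proof tacitly assumes.
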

\begin{proof}
Como $(c_{n}(t))_{n\in\N}$ es una sucesi\'on de conjuntos base, entonces $\Phi_{C}\in\mathcal{F}$ y
de aquí el producto $\circledcirc$ es cerrado en $\mathcal{G}$. Si $\Phi_{A}\circledcirc\Phi_{B}=\Phi$, entonces $c_{n}(t)=t^{n}$ implica que $(a_{n}(t))$ y $(b_{n}(t))$ son inversos con la 
composici\'on umbral. Luego $\Phi_{A}$ y $\Phi_{B}$ son inversos con respecto a $\circledcirc$.
La asociatividad de $\circledcirc$ sigue de la asociatividad de la composici\'on de umbral $\circ$.
De este modo $(\mathcal{F},\circledcirc)$ es un grupo. Ahora defina el mapa 
$\xi:\mathcal{F}\rightarrow\mathcal{U}$ por $\xi(\Phi_{A})=(a_{n}(t))_{n\in\N}$. Entonces
$\xi(\Phi_{A}\circledcirc\Phi_{B})=(a_{n}(t))_{n\in\N}\circ(b_{n}(t))_{n\in\N}$. As\'i
$\xi$ es un isomorfismo de grupos y por tanto $\mathcal{F}$ es un grupo de Lie.
\end{proof}

Ahora hallaremos la representaci\'on en serie de potencias formal de $\Phi_{Q}(t,x,\Delta f(x))$.
Tendremos presente que $q_{n}(t)=\sum_{i=1}^{n}\beta_{n,i}t^{i}$.

\begin{theorem}\label{theo_matriz}
Sea $Q$ un operador delta con conjunto base $q=(q_{n}(t))_{n\in\N}$ con 
$q_{n}(t)=\sum_{k=1}^{n}\beta_{k,n}t^{k}$ para todo $n\geq1$. Entonces
\begin{equation}\label{eqn_rel}
\Phi_{Q}(t,x,\Delta f(x))=x+\sum_{n=1}^{\infty}\textbf{B}_{n}[\Opa(\Delta f)\cdot F]t^{n}
\end{equation}
adem\'as
\begin{equation}\label{eqn_rel_sum}
\rho_{q}\Opa(\Delta f)\boxplus_{q}\rho_{q}\Opa(\Delta g)=\sum_{n=1}^{\infty}\textbf{B}_{n}[\Opa(\Delta f)\boxplus\Opa(\Delta g)]\cdot Ft^{n}
\end{equation}
y
\begin{equation}\label{eqn_rel_prod}
\rho_{q}\Opa(\Delta f)\circledast_{q}\rho_{q}\Opa(\Delta g)=\sum_{n=1}^{\infty}\textbf{B}_{n}[\Opa(\Delta f)\circledast\Opa(\Delta g)]\cdot Ft^{n}
\end{equation}
donde $\textbf{B}=(\textbf{B}_{n})_{n\in\N}$ es la \textbf{matriz de conexi\'on} conectando 
$(q_{n}(t))$ y $(t^{n})$ 
\begin{equation}
\textbf{B}_{n}=(b_{n,i})_{n\in\N_{1}}
\end{equation}
\begin{equation}
b_{n,i}=
\begin{cases}
0, & i<n,\\
\beta_{n,i}, & i\geq n
\end{cases}
\end{equation}
y $F=(1,\frac{1}{2!},\frac{1}{3!},...)$ y $\cdot$ es el producto de Hadamard de vectores, esto es,
producto componente a componente 
\end{theorem}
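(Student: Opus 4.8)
\emph{Plan de demostración.} La igualdad \eqref{eqn_rel} es, en esencia, la reorganización de la serie que define a $\Phi_{Q}$ en \eqref{flujo_delta}, agrupando las potencias de $t$. Sustituyendo $q_{n}(t)=\sum_{k=1}^{n}\beta_{k,n}t^{k}$ en \eqref{flujo_delta} se obtiene
\[
\Phi_{Q}(t,x,\Delta f(x))=x+\sum_{n=1}^{\infty}\frac{A_{n}([\delta^{n-1}f(x)])}{n!}\sum_{k=1}^{n}\beta_{k,n}t^{k}.
\]
Intercambiando el orden de sumación —el monomio $t^{n}$ aparece exactamente cuando el índice exterior es $\geq n$— el coeficiente de $t^{n}$ resulta ser $\sum_{m\geq n}\beta_{n,m}\,A_{m}([\delta^{m-1}f(x)])/m!$. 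Por la definición de $\textbf{B}_{n}=(b_{n,i})_{i}$ (con $b_{n,i}=\beta_{n,i}$ si $i\geq n$ y $b_{n,i}=0$ en otro caso) y de $F=(1/m!)_{m\geq 1}$, dicha suma coincide exactamente con el emparejamiento $\textbf{B}_{n}[\Opa(\Delta f)\cdot F]$ del vector fila $\textbf{B}_{n}$ con el producto de Hadamard $\Opa(\Delta f)\cdot F=(A_{m}([\delta^{m-1}f(x)])/m!)_{m\geq 1}$. Esto prueba \eqref{eqn_rel}.

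Para \eqref{eqn_rel_sum} y \eqref{eqn_rel_prod} (donde el factor de Hadamard $F$ debe leerse dentro del corchete, como en \eqref{eqn_rel}) observaría que el operador $v\mapsto\sum_{n\geq 1}\textbf{B}_{n}[v\cdot F]t^{n}$ es lineal en $v$, y usaría las identidades ya establecidas $\Opa(\Delta f)\boxplus\Opa(\Delta g)=\Opa(\Delta(f+g))$ y $\Opa(\Delta f)\circledast\Opa(\Delta g)=\Opa(\Delta(f\ast g))$, junto con $\rho_{q}\Opa(\Delta f)\boxplus_{q}\rho_{q}\Opa(\Delta g)=\rho_{q}[\Opa(\Delta f)\boxplus\Opa(\Delta g)]$ y su análogo para $\circledast_{q}$ (de la prueba de que $\rho_{q}$ es un isomorfismo de anillos). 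Entonces \eqref{eqn_rel_sum} y \eqref{eqn_rel_prod} no son más que \eqref{eqn_rel} aplicada a $f+g$ y a $f\ast g$ en lugar de $f$, reescribiendo $\Opa(\Delta(f+g))$ como $\Opa(\Delta f)\boxplus\Opa(\Delta g)$ y $\Opa(\Delta(f\ast g))$ como $\Opa(\Delta f)\circledast\Opa(\Delta g)$.

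El único punto que requiere atención —el cálculo en sí es pura contabilidad de índices— es la legitimidad del intercambio de sumación: al agrupar por potencias de $t$, el coeficiente de $t^{n}$ deja de ser el único término $A_{n}([\delta^{n-1}f])/n!$ y pasa a ser la serie $\sum_{m\geq n}\beta_{n,m}A_{m}([\delta^{m-1}f])/m!$. Convendría precisar en qué anillo vive esta serie —la compleción adecuada del anillo de coeficientes, la restricción a $f$ con $f(0)=0$, o bien la lectura de \eqref{eqn_rel} como identidad formal, coeficiente a coeficiente en las indeterminadas $A_{m}([\delta^{m-1}f])$. Hecha esa precisión, la demostración se reduce a reescribir \eqref{flujo_delta} y comparar los coeficientes de $t^{n}$ en ambos miembros.
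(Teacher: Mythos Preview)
Your proposal is correct and follows essentially the same approach as the paper: substitute $q_{n}(t)=\sum_{k=1}^{n}\beta_{k,n}t^{k}$ into the definition of $\Phi_{Q}$ (equivalently $\rho_{q}\Opa(\Delta f)$), interchange the order of summation to collect the coefficient of $t^{n}$, and recognize that coefficient as the pairing $\textbf{B}_{n}[\Opa(\Delta f)\cdot F]$. The paper then declares the other two identities ``an\'aloga''; your reduction of \eqref{eqn_rel_sum} and \eqref{eqn_rel_prod} to \eqref{eqn_rel} via $\rho_{q}\Opa(\Delta f)\boxplus_{q}\rho_{q}\Opa(\Delta g)=\rho_{q}[\Opa(\Delta f)\boxplus\Opa(\Delta g)]$ (and the $\circledast$ analogue) is exactly what ``analogous'' means here, and your remark about the formal nature of the interchange of summation is a legitimate caveat that the paper simply does not raise.
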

\begin{proof}
Probaremos (\ref{eqn_rel}) solamente, pues una prueba para (\ref{eqn_rel_sum}) y (\ref{eqn_rel_prod}) 
son an\'aloga. Pongamos $A_{n}\equiv A_{n}([\delta^{n-1}f])$. Tenemos
\begin{eqnarray*}
\rho_{q}\Opa(\Delta f)&=&\sum_{n=1}^{\infty}A_{n}(f)\frac{1}{n!}\sum_{k=1}^{n}\beta_{k,n}t^{k}\\
&=&A_{1}(f)\beta_{1,1}t+A_{2}(f)(\beta_{1,2}t+\beta_{2,2}t^{2})\frac{1}{2!}\\
&&+A_{3}(f)(\beta_{1,3}t+\beta_{2,3}t^{2}+\beta_{3,3}t^{3})\frac{1}{3!}+\cdots\\
&=&\left\{A_{1}(f)\beta_{1,1}+A_{2}(f)\beta_{1,2}\frac{1}{2!}+\cdots\right\}t\\
&&+\left\{A_{2}(f)\beta_{2,2}\frac{1}{2!}+A_{3}(f)\beta_{2,3}\frac{1}{3!}+\cdots\right\}t^{2}
+\cdots
\end{eqnarray*}
Por otro lado
\begin{eqnarray*}
\textbf{B}[\Opa(\Delta f)\cdot F]&=&
\left(
\begin{array}{cccc}
\beta_{1,1}&\beta_{1,2}&\beta_{1,3}&\cdots\\
0&\beta_{2,2}&\beta_{2,3}&\cdots\\
0&0&\beta_{3,3}&\cdots\\
\vdots&\vdots&\vdots&\ddots
\end{array}
\right)
\left(
\begin{array}{c}
\frac{A_{1}(f)}{1!}\\
\frac{A_{2}(f)}{2!}\\
\frac{A_{3}(f)}{3!}\\
\vdots
\end{array}
\right)\\
&=&(\textbf{B}_{1},\textbf{B}_{2},\textbf{B}_{3},...)[\Opa(\Delta f)\cdot F].
\end{eqnarray*}
Juntando todo obtenemos el resultado deseado.
\end{proof}

La matriz de conexi\'on del flujo $\Phi(t,x,\Delta f(x))$ es la matriz identidad infinita 
$\textbf{I}$. Si $\textbf{A}$ y $\textbf{B}$ son las matrices de conexi\'on de los flujos
$\Phi_{A}$ y $\Phi_{B}$, entonces $\textbf{BA}$ es la matriz de conexi\'on de 
$\Phi_{A}\circledcirc\Phi_{B}$. Luego el conjunto de matrices de conexi\'on forman un grupo y
tenemos el siguiente

\begin{theorem}
Denote $\mathcal{C}$ el grupo de matrices de conexi\'on de conjuntos bases. Entonces
$\mathcal{F}$ es anti-isomorfo a $\mathcal{C}$.
\end{theorem}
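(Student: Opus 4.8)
The plan is to exhibit an explicit anti-isomorphism between $\mathcal{F}$ and $\mathcal{C}$ by composing three maps that have already been set up in the paper. First I would recall that the previous theorem produced a group isomorphism $\xi:\mathcal{F}\to\mathcal{U}$, $\xi(\Phi_A)=(a_n(t))_{n\in\N}$, where $\mathcal{U}$ is the group of base sequences under umbral composition $\circ$. Next, using Theorem \ref{theo_matriz}, every base sequence $(a_n(t))_{n\in\N}$ with $a_n(t)=\sum_{k=1}^n\alpha_{k,n}t^k$ determines its connection matrix $\mathbf{A}=(\mathbf{A}_n)_{n\in\N}$ via $a_{n,i}=\alpha_{n,i}$ for $i\ge n$ and $0$ otherwise; let $\mu:\mathcal{U}\to\mathcal{C}$ be the map $(a_n(t))_{n\in\N}\mapsto\mathbf{A}$. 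This map is a bijection because the entries of the matrix are exactly the coefficients of the polynomials, and $\mathbf{A}$ is upper triangular with the $\beta$'s recording a basis change between $(a_n(t))$ and $(t^n)$. Then $\xi$ followed by $\mu$ gives a bijection $\mathcal{F}\to\mathcal{C}$, and it remains to check that it reverses the order of the group operations.

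The key algebraic step is the identity $\mu\big((a_n)\circ(b_n)\big)=\mathbf{B}\,\mathbf{A}$, i.e. the connection matrix of the umbral composition $c_n(t)=a_n(\underline b(t))$ is the matrix product of the connection matrices of $(b_n)$ and $(a_n)$, in that order. To see this, I would read the connection matrix as the transition matrix expressing the polynomials in terms of the monomial basis: if $a_n(t)=\sum_k\alpha_{k,n}t^k$, then $\mathbf A$ is the matrix whose $n$-th "column block" lists these coefficients, so that the vector $(a_n(t))_n$ equals $\mathbf A$ applied to $(t^n)_n$ (as in the Hadamard-product display of Theorem \ref{theo_matriz}). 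Umbral composition replaces each $t^k$ inside $a_n$ by $b_k(t)$, i.e. applies the transition matrix $\mathbf B$ of $(b_n)$; hence the coefficients of $c_n=a_n(\underline b(t))$ in the monomial basis are obtained by first expanding via $\mathbf A$ and then via $\mathbf B$, giving $\mathbf B\mathbf A$. Since the paper has already defined $\Phi_A\circledcirc\Phi_B=\Phi_C$ with $c_n(t)=a_n(\underline b(t))$, this is precisely the statement preceding the theorem, "$\mathbf{BA}$ is the connection matrix of $\Phi_A\circledcirc\Phi_B$," and I would simply verify it by this transition-matrix bookkeeping.

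Putting the pieces together: define $\Xi:\mathcal{F}\to\mathcal{C}$ by $\Xi(\Phi_A)=\mu(\xi(\Phi_A))=\mathbf{A}$. It is a bijection as a composite of bijections, and
\[
\Xi(\Phi_A\circledcirc\Phi_B)=\mathbf{B}\mathbf{A}=\Xi(\Phi_B)\,\Xi(\Phi_A),
\]
so $\Xi$ is a group anti-homomorphism, hence an anti-isomorphism, and $\mathcal{F}$ is anti-isomorphic to $\mathcal{C}$. The identity flow $\Phi$ corresponds to the base sequence $(t^n)_n$ and thus to the infinite identity matrix $\mathbf{I}$, which is consistent with $\mathbf{I}$ being a two-sided unit for matrix multiplication.

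The main obstacle is the order reversal: one must be careful that umbral composition $p_n(\underline q(t))$ "substitutes $q$ into $p$," so that as a change of basis it applies the $q$-transition first and the $p$-transition second, which is what forces the product $\mathbf{B}\mathbf{A}$ rather than $\mathbf{A}\mathbf{B}$ and hence makes the correspondence an anti-isomorphism rather than an isomorphism; the associativity and bijectivity claims are then routine. A secondary point worth a line is well-definedness of $\mu$ on all of $\mathcal{C}$, i.e. that every connection matrix of a base sequence is invertible (upper triangular with nonzero diagonal, since each $q_n$ has degree exactly $n$ with $\beta_{n,n}\ne 0$), so that $\mathcal{C}$ is genuinely a group under matrix multiplication and $\mu$ lands in it bijectively.
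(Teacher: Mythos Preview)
Your proposal is correct and follows essentially the same route as the paper: the paper defines the single map $\mu:\mathcal{F}\to\mathcal{C}$, $\mu(\Phi_A)=\mathbf{A}$, and checks $\mu(\Phi_A\circledcirc\Phi_B)=\mathbf{B}\mathbf{A}$, $\mu(\Phi)=\mathbf{I}$, $\mu(\Phi_A^{-1})=\mu(\Phi_A)^{-1}$, which is exactly your $\Xi$ after you factor it as $\mu\circ\xi$ through $\mathcal{U}$. Your version is in fact more detailed than the paper's, since you justify the order reversal $\mathbf{B}\mathbf{A}$ via transition-matrix bookkeeping and note why $\mathcal{C}$ is a group, whereas the paper simply asserts the identity $\mu(\Phi_C)=\mathbf{B}\mathbf{A}$ (having stated it just before the theorem) and concludes.
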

\begin{proof}
Sean $A,B\in\hurw_{R}[[\delta]]$ operadores delta con conjuntos base $(a_{n}(t))_{n\in\N}$ y
$(b_{n}(t))_{n\in\N}$, respectivamente y sea $\Phi_{C}=\Phi_{A}\circledcirc\Phi_{B}$
con $c_{n}(t)=a_{n}(\underline{b}(t))$. Ahora sean $a_{n}(t)=\sum_{k=0}^{n}\alpha_{k,n}t^{k}$,
$b_{n}(t)=\sum_{k=0}^{n}\beta_{k,n}t^{k}$ y $c_{n}(t)=\sum_{k=0}^{n}\alpha_{k,n}t^{k}$ con matrices
de conexi\'on $\textbf{A},\textbf{B}$ y $\textbf{C}$. Defina el mapa 
$\mu:\mathcal{F}\rightarrow\mathcal{C}$ por $\mu(\Phi_{A})=\textbf{A}$. Entonces
$\mu(\Phi_{A}\circledcirc\Phi_{B})=\mu(\Phi_{C})=\textbf{B}\textbf{A}=\mu(\Phi_{B})\mu(\Phi_{A})$.
Adem\'as $\mu(\Phi)=\textbf{I}$ y $\mu(\Phi_{A}^{-1})=\mu(\Phi_{A})^{-1}$, en donde $\Phi_{A}^{-1}$
es el flujo inverso de $\Phi_{A}$. Luego $\mu$ es un anti-isomorfismo de grupos.
\end{proof}

Finalizamos este art\'iculo mostrando ejemplos de sistemas din\'amicos tipo delta. En particular 
cuando $Q$ es alguno de los siguientes operadores: $\vartriangle$, $\triangledown$, 
$E^{\alpha}\delta$ y $\log(I+\delta)$.

\section{Sistemas din\'amicos de diferencia forward}

Un sistema din\'amico discreto uni-dimensional es uno de la forma
\begin{equation}\label{eqn_sdd}
y_{n+1}=g(y_{n})
\end{equation}
en donde $n\in\N$. Este tipo de ecuaciones son de gran importancia por sus aplicaciones 
en econom\'ia, ciencias sociales, biolog\'ia, f\'isica, ingenier\'ia, redes neurales, etc.
La ecuaci\'on (\ref{eqn_sdd}) generalmente no tiene soluci\'on exacta cuando 
$g$ es una funci\'on no lineal. Sin embargo, los resultados anteriores permiten echar a un lado
esta limitaci\'on. Usamos la diferencia forward $\vartriangle$ de $y_{n}$, esto es,
$\vartriangle y_{n}=y_{n+1}-y_{n}$. Luego (\ref{eqn_sdd}) es equivalente a la ecuaci\'on
\begin{equation}
\vartriangle y_{n}=f(y_{n})
\end{equation}
en donde $f(x)=g(x)-x$.

El operador $\vartriangle$ es un operador delta con conjunto base $q=((t)_{n})_{n\in\N}$ donde
$(t)_{n}=t(t-1)(t-2)\cdots(t-n+1)$ y
\begin{enumerate}
\item $(t)_{0}=1$,
\item $(0)_{n}=0$,
\item $\vartriangle(t)_{n}=n(t)_{n-1}$
\end{enumerate}

Sea $L$ la representaci\'on umbral de $(t)_{n}$. Tenemos

\begin{definition}
Definimos un \textbf{sistema dinámico forward} sobre el anillo $R$ como la terna 
$(R,L\hurw_{S}[[t]],\Phi_{\vartriangle})$ donde $R$ es el \textbf{conjunto de tiempos}, 
$L\hurw_{S}[[t]]$ el \textbf{espacio de fases} y $\Phi_{\vartriangle}$ es el \textbf{flujo forward} 
\begin{equation}\label{flujo_diferencial}
\Phi_{\vartriangle}(t,x,\Delta f(x))=x+\sum_{k=1}^{\infty}A_{k}([\delta^{(k-1)}f(x)])\frac{(t)_{k}}{k!}.
\end{equation}
esto es, $\Phi_{\vartriangle}$ es el mapa definido
por $\Phi_{\vartriangle}:R\times L\hurw_{S}[[t]]\times\Delta\hurw_{R}[[x]]\rightarrow L\hurw_{S}[[t]]$ donde $S=\hurw_{R}[[x]]$.
\end{definition}

Por definici\'on $\frac{(t)_{n}}{n!}=\binom{t}{n}$. Entonces 
\begin{equation}\label{eqn_flujo_for}
\Phi_{\vartriangle}(t,x,\Delta f(x))=x+\sum_{k=1}^{\infty}A_{k}([\delta^{(n-1)}f(x)])\binom{t}{k}
\end{equation}
es la soluci\'on de (\ref{eqn_sdd}) para todo $t$ en el dominio $R$. Cuando $t\in\N$, entonces (\ref{eqn_flujo_for}) se reduce a 
\begin{equation}
y_{n}=\Phi_{\vartriangle}(n,x,\Delta f(x))=x+\sum_{k=1}^{n}A_{k}([\delta^{(n-1)}f(x)])\binom{n}{k}
\end{equation}
y tenemos una soluci\'on exacta a la ecuaci\'on (\ref{eqn_sdd}) para todo 
$f(x)=g(x)-x\in\hurw_{R}[[x]]$. Es de inter\'es notar que $\Phi_{\vartriangle}$ es una funci\'on
de coeficientes binomiales. Luego es posible una interpretaci\'on combinatorial a estas soluciones.

Por un uso directo de la prueba de la Proposici\'on \ref{prop_pol_lin} obtenemos

\begin{equation}
\rho_{q}\Opa(\Delta(ax+b))=\frac{1}{a}(ax+b)\left((1+a)^{t}-1\right)
\end{equation}

y si $f(x)=\prod_{k=1}^{m}(a_{k}x+b_{k})$ entonces

\begin{equation}\label{eqn_sdd_poli}
\Phi_{\vartriangle}(t,x,\Delta f(x))=x+\bcastq_{k=1}^{m}\frac{1}{a_{k}}(a_{k}x+b_{k})\left((1+a_{k})^{t}-1\right)
\end{equation}

es soluci\'on de la ecuaci\'on en diferencias

\begin{equation}
\vartriangle y_{n}=(a_{1}y_{n}+b_{1})(a_{2}y_{n}+b_{2})\cdots(a_{m}y_{n}+b_{m}).
\end{equation}

Como caso particular tenemos la ecuaci\'on logistica. Esta es la ecuaci\'on en diferencias de orden
uno no lineal m\'as simple y hasta ahora no exist\'ia una soluci\'on en forma cerrada para ella.
Esta ecuaci\'on es encontrada en cualquier libro de texto b\'asico sobre ecuaciones de
diferencias. Esta ecuaci\'on se escribe como
\begin{equation}\label{eqn_logis}
y_{n+1}=\mu y_{n}(1-y_{n}).
\end{equation}
Para resolverla debemos resolver la ecuaci\'on equivalente $\vartriangle y_{n}=y_{n}(\mu-1-\mu y_{n})=-y_{n}(\mu y_{n}-(\mu-1))$. Aqu\'i $f(x)=-x(\mu x-(\mu-1))$. Luego
\begin{eqnarray*}
\Phi_{\vartriangle}(t,x,\Delta f(x))&=&x+\Psi_{\vartriangle}(t,x,-\Delta x)\circledast_{\vartriangle}\Psi_{\vartriangle}(t,x,-\Delta(\mu x-(\mu-1)))\\
&=&x+(-x)\circledast_{\vartriangle}[\mu x-(\mu-1)](2^{t}-1)
\end{eqnarray*}

Con esta soluci\'on notamos que
\begin{eqnarray*}
\Phi_{\vartriangle}(t,0,\Delta f(x))&=&0\\
\Phi_{\vartriangle}\left(t,\frac{\mu-1}{\mu},\Delta f(x)\right)&=&\frac{\mu-1}{\mu}
\end{eqnarray*}
As\'i los ceros de $-x(\mu x-(\mu-1))$ corresponde con los puntos fijos de $\mu x(1-x)$.

Por otro lado, la ecuaci\'on log\'istica continua $y^{\prime}=-y(\mu y-(\mu-1))$ tiene flujo
\begin{equation}
\Phi(t,x,\Delta f(x))=\frac{(\mu-1)xe^{(\mu-1)t}}{\mu x(e^{(\mu-1)t}-1)+(\mu-1)}.
\end{equation}
Luego usando la representaci\'on umbral $L(t^{n})=(t)_{n}$ obtenemos otra forma de escribir la
soluci\'on de (\ref{eqn_logis})

\begin{equation}
L\left(\frac{(\mu-1)xe^{(\mu-1)t}}{\mu x(e^{(\mu-1)t}-1)+(\mu-1)}\right)=x+(-x)\circledast_{\vartriangle}[\mu x-(\mu-1)](2^{t}-1).
\end{equation}

Otra aplicaci\'on importante de las ecuaciones de diferencias la encontramos en la teor\'ia de
fractales, especialmente en mapas polinomiales iterativos. Uno de tales mapas son los mapas 
cuadr\'aticos complejos $z_{n+1}=z_{n}^{2}+c$ \'utiles para construir conjuntos de Mandelbrot.
Por ser una ecuaci\'on en diferencias de orden uno podemos usar (\ref{eqn_sdd_poli})
para encontrar la soluci\'on exacta a dicho mapa. 

Resolver
\begin{equation}
z_{n+1}=z_{n}^{2}+c
\end{equation}

es resolver la ecuaci\'on equivalente

\begin{equation}
\vartriangle z_{n}=z_{n}^{2}-z_{n}+c
\end{equation}
Para este caso tenemos que $f(x)=x^{2}-x+c=(x-\alpha)(x-\overline{\alpha})$ en donde 
$\alpha=\frac{1+\sqrt{4c-1}i}{2}$.
Luego
\begin{eqnarray*}
z_{n}&=&\Phi_{\vartriangle}(n,x,\Delta f(x))\\
&=&x+\Psi_{\vartriangle}(n,x,\Delta(x-\alpha))\circledast_{\vartriangle}\Psi_{\vartriangle}(n,x,\Delta(x-\overline{\alpha}))\\
&=&x+[(x-\alpha)(2^{n}-1)]\circledast_{\vartriangle}[(x-\overline{\alpha})(2^{n}-1)]
\end{eqnarray*}

Igualmente como se hizo con la ecuaci\'on log\'istica tenemos
\begin{equation}
L\left(\frac{\overline{\alpha}e^{\sqrt{4c-1}it}(x-\alpha)-\alpha(x-\overline{\alpha})}{(x-\alpha)e^{\sqrt{4c-1}it}-(x-\overline{\alpha})}\right)=x+[(x-\alpha)(2^{n}-1)]\circledast_{\vartriangle}[(x-\overline{\alpha})(2^{n}-1)]
\end{equation}

\section{Sistemas din\'amicos de diferencia backward}

Si $Q$ es el operador backward $\triangledown$, entonces su conjunto base son los factoriales
rising $q=((t)^{n})_{n\in\N}$ donde $(t)^{n}=t(t+1)(t+2)\cdots(t+n-1)$ y
\begin{enumerate}
\item $(t)^{0}=1$,
\item $(0)^{n}=0$,
\item $\triangledown(t)^{n}=n(t)^{n-1}$
\end{enumerate}

Con el operador backward podemos resolver ecuaciones en diferencias de la forma

\begin{equation}
\triangledown y_{n}=f(y_{n})
\end{equation}
en donde $\triangledown y_{n}=y_{n}-y_{n-1}$. Las ecuaciones en diferencias backward se aplican en 
las mismas \'areas en donde son aplicados las ecuaciones en diferencias forward.

Sea $L(t^{n})=(t)^{n}$ la representaci\'on umbral de $(t)^{n}$. Entonces tenemos la siguiente
definici\'on

\begin{definition}
Definimos un \textbf{sistema dinámico backward} sobre el anillo $R$ como la terna 
$(R,L\hurw_{S}[[t]],\Phi_{\triangledown})$ donde $R$ es el \textbf{conjunto de tiempos}, 
$L\hurw_{S}[[t]]$ el \textbf{espacio de fases} y $\Phi_{\triangledown}$ es el \textbf{flujo backward} 
\begin{equation}\label{flujo_diferencial}
\Phi_{\triangledown}(t,x,\Delta f(x))=x+\sum_{k=1}^{\infty}A_{k}([\delta^{(k-1)}f(x)])\frac{(t)^{k}}{k!}.
\end{equation}
esto es, $\Phi_{\triangledown}$ es el mapa definido
por $\Phi_{\triangledown}:R\times L\hurw_{S}[[t]]\times\Delta\hurw_{R}[[x]]\rightarrow L\hurw_{S}[[t]]$ donde $S=\hurw_{R}[[x]]$.
\end{definition}

Por definici\'on $\frac{(t)^{k}}{k!}=\binom{t+k-1}{k}=\binomangle{t}{k}$. Entonces 
\begin{equation}
\Phi_{\triangledown}(t,x,\Delta f(x))=x+\sum_{k=1}^{\infty}A_{k}([\delta^{(n-1)}f(x)])\binomangle{t}{k}
\end{equation}
El n\'umero $\binomangle{n}{k}$ es conocido como coeficiente binomial con repeticiones y es usado
para contar multiset de cardinalidad $k$ con elementos tomados de un conjunto de $n$ elementos.

Denote $\exp(a)$ la sucesi\'on $(a^{n})_{n\in\N_{1}}$ en $H_{R}$. En [3] fue mostrado que
\begin{equation}\label{eqn_R_prod}
\Opa(a\Delta f(x))=\exp(a)\Opa(\Delta f(x)),
\end{equation}

El siguiente resultado relaciona a los sistemas din\'amicos discretos de tipo forward y backward

\begin{theorem}
Para todo $f(x)\in\hurw_{R}[[x]]$ se cumple
\begin{equation}
\Phi_{\triangledown}(t,x,\Delta f(x))=\Phi_{\vartriangle}(-t,x,-\Delta f(x)).
\end{equation}
\end{theorem}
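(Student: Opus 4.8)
The plan is to compare the two flows summand by summand using their explicit series expansions. Recall that
\[
\Phi_{\vartriangle}(t,x,\Delta f(x))=x+\sum_{k=1}^{\infty}A_{k}([\delta^{k-1}f(x)])\frac{(t)_{k}}{k!},\qquad
\Phi_{\triangledown}(t,x,\Delta f(x))=x+\sum_{k=1}^{\infty}A_{k}([\delta^{k-1}f(x)])\frac{(t)^{k}}{k!},
\]
where $(t)_{k}=t(t-1)\cdots(t-k+1)$ and $(t)^{k}=t(t+1)\cdots(t+k-1)$. Two elementary facts drive the argument. First, the falling and rising factorials are dual under time reversal: $(-t)_{k}=(-1)^{k}(t)^{k}$, obtained by pulling a sign out of each of the $k$ factors of $(-t)(-t-1)\cdots(-t-k+1)$. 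Second, since $\Delta$ is additive one has $-\Delta f(x)=\Delta(-f(x))$, and \eqref{eqn_R_prod} applied with $a=-1$ gives $\Opa(-\Delta f(x))=\exp(-1)\,\Opa(\Delta f(x))$, that is, $A_{k}([\delta^{k-1}(-f)(x)])=(-1)^{k}A_{k}([\delta^{k-1}f(x)])$ for every $k\geq1$ (the autonomous polynomials $A_{k}$ are homogeneous of degree $k$ in $f,\delta f,\dots$).

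Next I would substitute $-t$ for $t$ and $-\Delta f(x)$ for $\Delta f(x)$ in the forward flow and invoke both facts:
\[
\Phi_{\vartriangle}(-t,x,-\Delta f(x))
=x+\sum_{k=1}^{\infty}A_{k}([\delta^{k-1}(-f)(x)])\frac{(-t)_{k}}{k!}
=x+\sum_{k=1}^{\infty}(-1)^{k}A_{k}([\delta^{k-1}f(x)])\cdot\frac{(-1)^{k}(t)^{k}}{k!}.
\]
The two factors $(-1)^{k}$ cancel in each summand, so the right-hand side equals $x+\sum_{k=1}^{\infty}A_{k}([\delta^{k-1}f(x)])\frac{(t)^{k}}{k!}=\Phi_{\triangledown}(t,x,\Delta f(x))$, which is exactly the asserted identity.

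There is essentially no obstacle: the statement is a bookkeeping consequence of two independent sign reversals — one coming from the time reversal $t\mapsto-t$ via the rising/falling factorial duality, the other from the field reversal $f\mapsto-f$ via the homogeneity recorded in \eqref{eqn_R_prod}. The only point deserving a word of care is the identification $-\Delta f(x)=\Delta(-f(x))$, which is immediate because $\Delta$ is a ring isomorphism onto $\dhurw_{R}[[x]]$; an equivalent route is to carry out the same cancellation inside the autonomous ring through the isomorphism $\rho_{q}$, but the direct series comparison above is the most transparent.
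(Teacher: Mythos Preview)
Your proof is correct and follows essentially the same approach as the paper: both arguments combine the rising/falling factorial duality $(-t)_{k}=(-1)^{k}(t)^{k}$ (written in the paper as $\binomangle{t}{k}=(-1)^{k}\binom{-t}{k}$) with the homogeneity relation \eqref{eqn_R_prod} at $a=-1$. The only cosmetic difference is that you start from the forward flow evaluated at $(-t,-\Delta f)$ while the paper starts from the backward flow at $(t,\Delta f)$.
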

\begin{proof}
Sabemos que $\binomangle{t}{k}=(-1)^{k}\binom{-t}{k}$. As\'i
\begin{eqnarray*}
\Phi_{\triangledown}(t,x,\Delta f(x))&=&x+\sum_{k=1}^{\infty}A_{k}([\delta^{(n-1)}f(x)])\binomangle{t}{k}\\
&=&x+\sum_{k=1}^{\infty}A_{k}([\delta^{(n-1)}f(x)])(-1)^{k}\binom{-t}{k}\\
&=&\Phi_{\vartriangle}(-t,x,-\Delta f(x))
\end{eqnarray*}
en donde hemos usado el (\ref{eqn_R_prod}) con $a=-1$.
\end{proof}

Por otro lado, podemos usar el anterior teorema y la ecuaci\'on (\ref{eqn_sdd_poli}) para mostrar que 

\begin{equation}
\Phi_{\triangledown}(t,x,\Delta f(x))=x+\bcastq_{k=1}^{m}\frac{1}{a_{k}}(a_{k}x+b_{k})((1-a_{k})^{t}-1)
\end{equation}

es soluci\'on de la ecuaci\'on en diferencia backward

\begin{equation}
\triangledown y_{n}=(a_{1}y_{n}+b_{1})(a_{2}y_{n}+b_{2})\cdots(a_{m}y_{n}+b_{m}).
\end{equation}

El teorema anterior implica que es suficiente con resolver uno de los dos tipos de ecuaciones de
diferencias y luego por aplicaci\'on de este obtener la soluci\'on del otro tipo.

\section{Sistemas din\'amicos tipo Abel}

Denote $A(\alpha)$ el operador delta $E^{\alpha}\delta$. Este operador es conocido como operador Abel
donde sus polinomios bases son los polinomios de Abel 
$(t(t-n\alpha)^{n-1})_{n\in\N}$ y
\begin{enumerate}
\item $t(t)^{-1}=1$,
\item $0(0-n\alpha)^{n-1}=0$,
\item $A(\alpha)t(t-n\alpha)^{n-1})=nt(t-(n-1)\alpha)^{n-2}$
\end{enumerate}
Los polinomios de Abel fueron estudiados en [1] en la representaci\'on de funciones. 
Queremos resolver ecuaciones de la forma

\begin{equation}
A(\alpha)\Phi=f(\Phi).
\end{equation}

Sea $L$ la representaci\'on umbral de $(t(t-n\alpha)^{n-1})_{n\in\N}$. Tenemos la 
siguiente definici\'on

\begin{definition}
Definimos un \textbf{sistema dinámico tipo Abel} sobre el anillo $R$ como la terna 
$(R,L\hurw_{S}[[t]],\Phi_{A(\alpha)})$ donde $R$ es el \textbf{conjunto de tiempos}, 
$L\hurw_{S}[[t]]$ el
\textbf{espacio de fases} y $\Phi_{A(\alpha)}$ es el \textbf{flujo Abel} 
\begin{equation}\label{flujo_diferencial}
\Phi_{A(\alpha)}(t,x,\Delta f(x))=x+\sum_{n=1}^{\infty}A_{n}([\delta^{(n-1)}f(x)])\frac{t(t-n\alpha)^{n-1}}{n!}.
\end{equation}
esto es, $\Phi_{A(\alpha)}$ es el mapa definido
por $\Phi_{A(\alpha)}:R\times L\hurw_{S}[[t]]\times\Delta\hurw_{R}[[x]]\rightarrow L\hurw_{S}[[t]]$ donde $S=\hurw_{R}[[x]]$.
\end{definition}

Usando (\ref{eqn_sdd_poli}) encontramos que 

\begin{equation}
\Phi_{A(\alpha)}(t,x,\Delta f(x))=x+\bcastq_{k=1}^{m}\frac{1}{a_{k}}(a_{k}x+b_{k})\left(e^{tW(\alpha)/\alpha}-1\right),
\end{equation}
en donde $W$ es la funci\'on $W$ de Lambert, es la soluci\'on de la ecuaci\'on
\begin{equation}
A(\alpha)\Phi=(a_{1}\Phi+b_{1})(a_{2}\Phi+b_{2})\cdots(a_{n}\Phi+b_{n})
\end{equation}

La siguiente identidad relaciona los flujos de las ecuaciones $A(\alpha)\Phi=af(\Phi)$ y
$A(a\alpha)\Phi=f(\Phi)$

\begin{theorem}
Para todo $a\in R$ se cumple
\begin{equation}
\Phi_{A(\alpha)}(t,x,a\Delta f(x))=\Phi_{A(a\alpha)}(at,x,\Delta f(x)).
\end{equation}
\end{theorem}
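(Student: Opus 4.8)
The statement is an identity between two formal power series in $t$ with coefficients in $\hurw_S[[x]]$, so the plan is to expand both sides using the defining formula for the Abel flow and then compare coefficients term by term; no limiting argument is needed. The whole proof rests on two homogeneity facts: one for the autonomous polynomials $A_n$ under rescaling of $f$, and one for the Abel base polynomials $q_n^{(\alpha)}(t)=t(t-n\alpha)^{n-1}$ under simultaneous rescaling of $t$ and $\alpha$.

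First I would treat the left-hand side. Since $a\Delta f(x)=\Delta(af)(x)$, the definition of the Abel flow gives
\begin{equation*}
\Phi_{A(\alpha)}(t,x,a\Delta f(x))=x+\sum_{n=1}^{\infty}A_{n}([\delta^{n-1}(af)(x)])\,\frac{t(t-n\alpha)^{n-1}}{n!}.
\end{equation*}
Now I invoke equation~(\ref{eqn_R_prod}), $\Opa(a\Delta f(x))=\exp(a)\Opa(\Delta f(x))$, which componentwise reads $A_{n}([\delta^{n-1}(af)(x)])=a^{n}A_{n}([\delta^{n-1}f(x)])$. Substituting this yields
\begin{equation*}
\Phi_{A(\alpha)}(t,x,a\Delta f(x))=x+\sum_{n=1}^{\infty}a^{n}A_{n}([\delta^{n-1}f(x)])\,\frac{t(t-n\alpha)^{n-1}}{n!}.
\end{equation*}

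Next I would expand the right-hand side. The base polynomials of the delta operator $A(a\alpha)=E^{a\alpha}\delta$ are $t(t-na\alpha)^{n-1}$, so by definition
\begin{equation*}
\Phi_{A(a\alpha)}(at,x,\Delta f(x))=x+\sum_{n=1}^{\infty}A_{n}([\delta^{n-1}f(x)])\,\frac{(at)(at-n\cdot a\alpha)^{n-1}}{n!}.
\end{equation*}
The key elementary observation is the joint homogeneity of the Abel polynomial: $(at)(at-na\alpha)^{n-1}=(at)\,a^{n-1}(t-n\alpha)^{n-1}=a^{n}\,t(t-n\alpha)^{n-1}$. Plugging this in makes the right-hand side coincide coefficient-for-coefficient with the expression obtained for the left-hand side, which proves the theorem. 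The only point requiring any care is the appeal to~(\ref{eqn_R_prod}) for the scaling of $A_n$; everything else is a bookkeeping check on the explicit form of $q_n^{(\alpha)}$, so I expect no real obstacle.
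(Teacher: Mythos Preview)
Your argument is correct and is essentially the paper's own proof: both use the scaling $A_n([\delta^{n-1}(af)])=a^nA_n([\delta^{n-1}f])$ from (\ref{eqn_R_prod}) together with the elementary homogeneity $(at)(at-na\alpha)^{n-1}=a^n\,t(t-n\alpha)^{n-1}$ to match the two series term by term. The only cosmetic difference is that you expand each side separately and then compare, while the paper rewrites the left-hand side directly into the right-hand side in a single chain of equalities.
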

\begin{proof}
\begin{eqnarray*}
\Phi_{A(\alpha)}(t,x,a\Delta f(x))&=&x+\sum_{n=1}^{\infty}a^{n}A_{n}([\delta^{(n-1)}f(x)])\frac{t(t-n\alpha)^{n-1}}{n!}\\
&=&x+\sum_{n=1}^{\infty}A_{n}([\delta^{(n-1)}f(x)])\frac{(at)(at-na\alpha)^{n-1}}{n!}\\
&=&\Phi_{A(a\alpha)}(at,x,\Delta f(x)).
\end{eqnarray*}
\end{proof}

Con esta identidad podremos, quizas, darle una estructura m\'as que de grupo uni par\'ametrico
que la que se obtiene con el flujo de tipo Abel, pero no ser\'a hecho en este art\'iculo.

\section{Sistemas din\'amicos tipo Touchard}

Los polinomios de Touchard son polinomios de la forma
\begin{equation}
\phi_{n}(t)=\sum_{k=0}^{n}S(n,k)t^{k}
\end{equation}
en donde $S(n,k)$ son los n\'umeros de Stirling de segunda clase. 
Los polinomios de Touchard son polinomios base del operador delta de Touchard $T=\log(I+\delta)$ y
satisfacen
\begin{enumerate}
\item $\phi_{0}(t)=1$,
\item $\phi_{n}(0)=0$,
\item $\log(I+\delta)\phi_{n}(t)=n\phi_{n-1}(t)$.
\end{enumerate}

Sea $L$ la representaci\'on umbral de $\phi_{n}(t)$. Entonces tenemos la definici\'on

\begin{definition}
Definimos un \textbf{sistema dinámico tipo Touchard} sobre el anillo $R$ como la terna 
$(R,L\hurw_{S}[[t]],\Phi_{T})$ donde $R$ es el \textbf{conjunto de tiempos}, $L\hurw_{S}[[t]]$ el
\textbf{espacio de fases} y $\Phi_{T}$ es el \textbf{flujo Touchard} 
\begin{equation}\label{flujo_diferencial}
\Phi_{T}(t,x,\Delta f(x))=x+\sum_{n=1}^{\infty}A_{n}([\delta^{(n-1)}f(x)])\dfrac{\phi_{n}(t)}{n!}.
\end{equation}
esto es, $\Phi_{T}$ es el mapa definido
por $\Phi_{T}:R\times L\hurw_{S}[[t]]\times\Delta\hurw_{R}[[x]]\rightarrow L\hurw_{S}[[t]]$ donde 
$S=\hurw_{R}[[x]]$.
\end{definition}

Usando (\ref{eqn_sdd_poli}) encontramos que 

\begin{equation}
\Phi_{T}(t,x,\Delta f(x))=x+\bcastq_{k=1}^{m}\frac{1}{a_{k}}(a_{k}x+b_{k})\left(e^{t(e-1)}-1\right)
\end{equation}

es el flujo de Touchard de la ecuaci\'on

\begin{equation}
\log(I+\delta)\Phi=(a_{1}\Phi+b_{1})(a_{2}\Phi+b_{2})\cdots(a_{n}\Phi+b_{n})
\end{equation}

\section{Conclusi\'on}
En este art\'iculo los sistemas din\'amicos tipo delta fueron definidos y algunas de sus propiedades
estudiadas. La teor\'ia aqu\'i desarrollada es muy \'util para resolver sistemas din\'amicos tipo
delta no lineales. Particularmente los sistemas din\'amicos de diferencia y de tipo Touchard pueden
ser \'util para resolver problemas en combinatoria enumerativa. Como los polinomios de Abel
son usados en la teor\'ia algebraica estad\'istica, entonces los sistemas din\'amicos tipo Abel
podr\'ian ser \'util para aplicaciones en estad\'istica. En este art\'iculo la derivaci\'on fue 
variada. Estamos interesados en variar el dominio de integridad y dejar fijo el operador delta
y de esta forma estudiar los distintos resultados que se pueden obtener. Otra posibilidad es 
variar el factorial, esto es, construir una teor\'ia para un factorial generalizado.

\Addresses
\end{document}